\documentclass{article}
\usepackage[utf8]{inputenc}
\usepackage{graphicx} 
\usepackage{amsmath,amsfonts,amssymb,amsthm}
\usepackage[mathscr]{eucal}
\usepackage{amscd}
\usepackage{tikz}
\usepackage{tkz-graph}
\usepackage{multicol}
\usepackage{float}
\usepackage{authblk}

\newtheorem{theorem}{THEOREM}[section]
\newtheorem{lemma}[theorem]{LEMMA}

\newtheorem{remark}[theorem]{REMARK}

\newcommand*{\legendre}[2]{\genfrac(){}{}{#1}{#2}}

\title{On the Fractional Parts of Polynomials Modulo $p$}
\author{Xuejun Guo$^1$, Chen Lin$^2$\footnote{Corresponding author.}, Zhefeng Xu$^3$}
\affil{{\small {$^{1,2}$School of Mathematics, Nanjing University, Nanjing 210093, China}\\{$^3$Research Center for Number Theory and Its Applications, Northwest University, Xi'an 710127, China}\\
$^1$guoxj@nju.edu.cn, 
$^2$chen.lin@smail.nju.edu.cn, $^3$zfxu@nwu.edu.cn} }
\date{}
\date{}

\begin{document}

\maketitle
\begin{abstract}
We study a half-interval distribution problem for polynomial residues modulo an odd prime $p$: how often the fractional part of $\varphi(x)/p$ lies in the upper half of the unit interval as $x$ ranges over $1\leq x< p/2$. Using finite Fourier expansions together with the Weil bound, we prove an asymptotic formula $\#\left\{1\leq x< p/2:\left\{{\varphi(x)}/{p}\right\}>\frac12\right\}
=\frac{p}{4}+O_\varphi(\sqrt p\log^2 p).
$
We then show that the error term can be improved to $O_\varphi(\sqrt p\log p)$ for arbitrary quadratic polynomials and for polynomials satisfying suitable reflection symmetries. For even monomials $\varphi(x)=x^m$, we further obtain the bound $O_m(\sqrt p\log\log p)$ under the Generalized Riemann Hypothesis. Finally, in the case $m=2$, we prove an unconditional matching lower bound, showing that the factor $\log\log p$ is best possible in this setting.
\end{abstract}
  \noindent { 2020\it Mathematics Subject Classification: 11L07, 11L40, 11T23} 
  \\[1mm]
    \noindent {\it Keywords: Fractional parts of polynomials, Incomplete exponential sums, Weil bound, Finite fields, Dirichlet characters}

\section{Introduction}
The purpose of this paper is to study a finite analogue of Weyl's equidistribution problem for polynomial values modulo a prime \cite{Weyl}, with the variable restricted to a half-interval.  Let \(p\) be an odd prime and let \(\varphi\in\mathbb Z[x]\).  We are interested in the distribution of the fractional parts
$
    \left\{{\varphi(k)}/p\right\}, 1\leq k< p/2.
$
If \(k\) ranges over a complete residue system modulo \(p\), the problem is governed by complete exponential sums and hence by the Weil bound.  The restriction \(1\leq k< p/2\), however, leads naturally to incomplete exponential sums, and this is the main source of difficulty.

Let $m>1$ be an integer and $p$ be an odd prime. Define the counting function 
$$f_m(p):=\#\left\{1\leq k<\frac{p}{2}: \left\{\frac{k^m}{p}\right\}>\frac{1}{2}\right\},$$
where $\{x\}$ denotes the fractional part of a real number $x$.
For the monomial \(\varphi(x)=x^m\), Zhi-Wei Sun \cite{SZW} conjectured that $f_m(p) = \frac{p}{4} + O_m(\sqrt{p})$ in 2018.  This is a natural square-root cancellation prediction, since the expected main term is one half of the half-interval.

One of our main conclusions is that Sun's predicted error term fails
already for \(m=2\).
We further  investigate a generalized version of this problem. For any polynomial $\varphi(x) \in \mathbb{Z}[x]$ of degree $m \ge 2$, we define
$$f_\varphi(p) := \#\left\{1 \le k < \frac{p}{2} : \left\{\frac{\varphi(k)}{p}\right\} > \frac{1}{2}\right\}.$$
We establish the estimate for $f_\varphi$ as follows. 
\begin{theorem}
\label{general}
   Let \(\varphi(x)\in\mathbb Z[x]\) be a fixed polynomial of degree \(m\ge 2\). For any odd prime $p$ for which the reduction of $\varphi$ modulo $p$ has degree at least 2, we have
    $$\left|f_\varphi(p)-\frac{(p-1)^2}{4p}\right| \leq \frac{4(m-1)}{\pi^2}\sqrt{p}\log(3p)\left(\frac{1}{2}+\frac{4}{\pi^2}\log(3p)\right).$$
    In particular,
    $$f_\varphi(p)=\frac{p}{4}+O_\varphi(\sqrt{p}\log^2p).$$
\end{theorem}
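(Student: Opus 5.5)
We write the counting function as a double sum of indicator functions, expand those indicators in additive characters modulo $p$, and reduce everything to incomplete polynomial exponential sums, which we control by completing and applying the Weil bound.

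\textbf{Setting up.} Let $H=\frac{p-1}{2}$, $I=\{1,\dots,H\}$ and $J=\{H+1,\dots,p-1\}$. Since $p$ is odd, $\{\varphi(k)/p\}>\frac12$ holds exactly when $\varphi(k)\bmod p$ lies in $J$. Write $e_p(t)=e^{2\pi i t/p}$. Orthogonality gives
\[
f_\varphi(p)=\sum_{k\in I}\sum_{y\in J}\frac1p\sum_{a=0}^{p-1}e_p\bigl(a(\varphi(k)-y)\bigr).
\]
The term $a=0$ contributes $|I||J|/p=(p-1)^2/(4p)$, which is exactly the main term in the statement. For $a\neq0$ we separate the two sums and write $S(a)=\sum_{k\in I}e_p(a\varphi(k))$ and $T(a)=\sum_{y\in J}e_p(-ay)$. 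The error is then $\frac1p\sum_{a\ne0}S(a)T(a)$.

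\textbf{Bounding $T(a)$.} $T(a)$ is a geometric series, so $|T(a)|\le \frac{1}{|\sin(\pi a/p)|}\le \frac{p}{2\|a/p\|}$. This gives
\[
\frac1p\sum_{a\ne0}|T(a)|\ll \sum_{1\le a\le p/2}\frac{1}{a}\ll\log p,
\]
and the explicit constants $\frac{1}{\pi}$ or $\frac{2}{\pi^2}$ come from the usual treatment of $\sum 1/|\sin|$.

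\textbf{Bounding $S(a)$.} Complete the incomplete sum $S(a)$:
\[
S(a)=\frac1p\sum_{b=0}^{p-1}\Bigl(\sum_{k\in I}e_p(-bk)\Bigr)\Bigl(\sum_{x\bmod p}e_p\bigl(a\varphi(x)+bx\bigr)\Bigr).
\]
Take $a\ne0$. Because $\varphi$ mod $p$ has degree $d$ with $2\le d\le m$, the polynomial $a\varphi(x)+bx$ has degree $d$, and this is not divisible by $p$ since $d<p$ for large $p$; small $p$ can be absorbed. The Weil bound then gives a complete sum of absolute value at most $(m-1)\sqrt p$ for every $b$, including $b=0$. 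The inner geometric sums add up to $\sum_b|\sum_{k\in I}e_p(-bk)|\le H+\frac{2}{\pi}p\log(3p)$ or similar, which is $\ll p\log p$. Hence $|S(a)|\le (m-1)\sqrt p\,(\tfrac12+\tfrac{4}{\pi^2}\log 3p)$ up to the precise constants. This is the source of the factor $\bigl(\frac12+\frac4{\pi^2}\log(3p)\bigr)$: the $\frac12$ comes from the $b=0$ term, with $|I|/p\approx\frac12$, and the log comes from $b\ne0$. Combining with the $T$-bound gives $\frac{4(m-1)}{\pi^2}\sqrt p\log(3p)\,(\cdots)$.

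\textbf{Main obstacle.} The approach itself is routine; the difficulty is obtaining the exact constants $\frac{4}{\pi^2}\log(3p)$. I would use the standard estimate
\[
\sum_{b=1}^{p-1}\Bigl|\sum_{k\in I}e_p(bk)\Bigr|\le \sum_b\frac{1}{|\sin(\pi b/p)|}\cdot|\sin(\pi bH/p)|.
\]
Exploiting $H=(p-1)/2$, only odd $b$ (after reflection) contribute fully, and this gives the $4/\pi^2$ constant. The same estimate applied to $T$ gives the other $4/\pi^2$-type factor. To get $\log(3p)$, bound $\sum_{b\le p/2}\frac{1}{b}$ by $\log(3p)$ via $\csc x\le 1/x+$ a small correction, the convexity trick of Cochrane.

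The "in particular" statement follows immediately, since $\frac{(p-1)^2}{4p}=\frac p4+O(1)$ and only finitely many $p$ have $\deg(\varphi\bmod p)<2$.
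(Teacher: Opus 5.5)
Your architecture is the paper's. Expanding $\mathbf{1}_J$ and then completing $S(a)$ through the Fourier expansion of $\mathbf{1}_I$ is exactly the paper's double expansion $f_\varphi(p)=p^{-2}\sum_{a,b}T(a)U(b)W(a,b)$. Your bookkeeping also matches it: the $\frac12$ comes from $|U(0)|/p$, and the Weil bound applies for every $b$ once $a\neq0$. So the $O_\varphi(\sqrt p\log^2 p)$ statement is proved.

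The explicit inequality is not proved, and the first reason is the constant. The bounds you actually write down are $|T(a)|\le 1/|\sin(\pi a/p)|$ and $\sum_b|U(b)|\le H+\frac{2}{\pi}p\log(3p)$. These have leading constant $\frac{2}{\pi}$ (or $1$ after using $\sin x\ge 2x/\pi$). Both exceed $\frac{4}{\pi^2}$, so they do not give the stated right-hand side. The parity remark is a heuristic, not an argument, and it is also misattributed. The constant $\frac{4}{\pi^2}$ belongs to an interval of arbitrary length and comes from the mean value $\frac{2}{\pi}$ of $|\sin(\pi aN/p)|$. For $N=\frac{p-1}{2}$ one computes exactly
$\sum_{a\neq0}|T(a)|=\sum_{a\ \mathrm{odd},\,a<p}\csc\frac{\pi a}{2p}\sim\frac{1}{\pi}p\log p$,
so a careful, fully explicit version of your idea would work. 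The paper closes this step by quoting the known bound $\sum_{a\neq0}|T_H(a)|\le\frac{4}{\pi^2}p\log(3p)$ for any interval $H$ (Lemma \ref{estimate_T}), applied to both $J$ and $I$. You need either that citation or an explicit computation valid for every odd $p$.

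The second reason is your claim that ``small $p$ can be absorbed.'' That is legitimate for the $O$-statement, but not for an inequality asserted for every odd prime with $\deg\bar\varphi\ge 2$. No absorption is needed anyway. Write $d=\deg\bar\varphi$.
\begin{itemize}
\item If $p\nmid d$, the Weil bound gives $|W(a,b)|\le (d-1)\sqrt p\le (m-1)\sqrt p$.
\item If $p\mid d$, then $m\ge d\ge p$, and trivially $|W(a,b)|\le p\le (p-1)\sqrt p\le (m-1)\sqrt p$. This is the remark the paper makes after Lemma \ref{Weil bound}.
\end{itemize}
With these two repairs your argument yields the stated bound exactly.
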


\begin{remark}
\label{remark1.2}
    (1) For odd prime $p$ for which the reduction of $\varphi$ modulo $p$ has degree 0 or 1, we use the trivial estimate 
    $$\left|f_\varphi(p)-\frac{(p-1)^2}{4p}\right| \leq\left|\frac{p-1}{2}-\frac{(p-1)^2}{4p}\right|=\frac{p^2-1}{4p}.$$
    Since $\varphi$ is fixed, its reduction modulo $p$ is constant or linear for only finitely many primes. These primes can therefore be absorbed into the implied constant in the $O_\varphi$-term.\\
    (2) The symbol $O_\varphi$ means that the implied constant depends on the degree and coefficients of $\varphi$.
\end{remark}
The $\log^2 p$ factor in Theorem \ref{general} is not necessarily optimal for polynomials of small degrees. In particular, for the quadratic case, we establish the following improved estimate.

\begin{theorem}
\label{quadratic}
    Let $\varphi(x)=Ax^2+Bx+C \in \mathbb{Z}[x]$ be a fixed quadratic polynomial with $A \neq 0$. For any odd prime $p\nmid A$,
    \[
        \left|f_\varphi(p)-\frac{(p-1)^2}{4p}\right|\leq\frac{4}{\pi^2}(7.0508\sqrt{p}+2)\log(3p).
    \]
    In particular,  
    $$f_\varphi(p)=\frac{p}{4}+O_\varphi(\sqrt{p}\log p).$$
\end{theorem}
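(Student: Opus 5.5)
We follow the Fourier-analytic approach behind Theorem \ref{general}, and save one logarithm by evaluating the quadratic Gauss sums exactly instead of using the Weil bound. Write $H=\{1\le k<p/2\}$ and $U=\{u\in\mathbb F_p:\ p/2<u<p\}$, so that
\[
f_\varphi(p)=\sum_{k\in H}\mathbf 1_U(\varphi(k)).
\]
Expanding both indicators in additive characters modulo $p$ gives
\[
f_\varphi(p)=\frac1{p^2}\sum_{a,b\bmod p}\widehat{\mathbf 1_H}(a)\,\widehat{\mathbf 1_U}(b)\,S(a,b),\qquad S(a,b)=\sum_{x\bmod p}e\!\left(\frac{b\varphi(x)-ax}{p}\right).
\]
The term $a=b=0$ gives $\frac{(p-1)^2}{4p}$, since $|H|=|U|=\frac{p-1}2$. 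If $b=0$ and $a\ne0$, then $S=0$. It remains to treat $b\neq0$.

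For $b\ne0$ we complete the square. Since $p\nmid A$, we have $|S(a,b)|=\sqrt p$ exactly, with an explicit phase:
\[
S(a,b)=e\!\left(\frac{bC-\overline{4Ab}\,(bB-a)^2}{p}\right)\legendre{Ab}{p}G,\qquad G=\sum_x e(x^2/p).
\]
The crude approach bounds this by $\sqrt p$ and uses $\sum_a|\widehat{\mathbf 1_H}(a)|\sum_b|\widehat{\mathbf 1_U}(b)|\ll p^2\log^2 p$, which only recovers Theorem \ref{general}. Instead, we sum over $a$ first with $b$ fixed:
\[
\sum_a \widehat{\mathbf 1_H}(a)\,e\!\left(-\frac{\overline{4Ab}(bB-a)^2}{p}\right)=\sum_{k\in H}\sum_a e\!\left(\frac{-ak-\overline{4Ab}(a-bB)^2}{p}\right).
\]
The inner sum over $a$ is again a quadratic Gauss sum in $a$, of modulus $\sqrt p$. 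It equals $\sqrt p$ times a phase of the form $e\bigl((Abk^2+bBk)/p\bigr)$, up to a unimodular constant. This is the expected duality: the Fourier transform of a quadratic phase is a quadratic phase.

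After this step the total error becomes
\[
\frac{\sqrt p\cdot\sqrt p}{p^2}\sum_{b\ne0}|\widehat{\mathbf 1_U}(b)|\,\Bigl|\sum_{k\in H}e\!\left(\frac{b\varphi(k)}{p}\right)\Bigr|.
\]
This is circular, because the inner sum is incomplete again. So we do not carry out both expansions. We perform only one, and the plan is:
\begin{enumerate}
\item Expand only $\mathbf 1_U$. This gives
\[
f_\varphi(p)-\frac{(p-1)^2}{4p}=\frac1p\sum_{b\ne0}\widehat{\mathbf 1_U}(b)\,T(b),\qquad T(b)=\sum_{k\in H}e\!\left(\frac{b\varphi(k)}p\right).
\]
Here $\widehat{\mathbf 1_U}(b)$ is an explicit geometric sum, of size $\ll p/|b|$.
\item Write $T(b)=\frac1p\sum_a\widehat{\mathbf 1_H}(a)S(a,b)$ using the explicit Gauss-sum formula above.
\item Exploit the near-symmetry of $U$ and $H$. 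We have $\widehat{\mathbf 1_U}(b)=\frac{1}{e(b/p)-1}\bigl(e(\cdot)-e(\cdot)\bigr)$, and it is essentially $\frac{-1}{1-e(-b/p)}\bigl(1-(-1)^b\bigr)$ up to $O(1)$ corrections. This kills even $b$ (up to $O(1)$ terms) and leaves a weight of $\cot$-type, not absolute-value type. Similarly for $\widehat{\mathbf 1_H}(a)$.
\item The resulting double sum is $\sum_{a,b}\frac{c(a,b)}{ab}$ with $c(a,b)$ unimodular and carrying an explicit Legendre symbol $\legendre{b}{p}$ and the quadratic phase above. Bound it by first summing over $b$ for fixed $a$. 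Summation by parts against the bounded partial sums should give $O(\sqrt p)\cdot\sum_a\frac1{|a|}=O(\sqrt p\log p)$.
\end{enumerate}

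The expected main obstacle is step 4. We must show that the sum over $b$ of $\frac{1}{b}\legendre{b}{p}\,e\bigl(-\overline{4Ab}(bB-a)^2/p\bigr)$ is $O(1)$ uniformly in $a$, rather than $O(\log p)$. The first attempt will be to substitute $b\mapsto\bar b$ in the phase, so that the phase becomes linear in $\bar b$ times constants, and to use the standard bound $\sum_{b\le N}\legendre{b}{p}e(c\bar b/p)\ll\sqrt p\log p$ via Kloosterman/Salié-type sums. If this merely reproduces the loss, the fallback is an explicit evaluation as in the classical formula $\sum_b\legendre bp\frac1b\propto L(1,\chi)$-type constants. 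These are bounded by $\log p$ only once, which still yields the claimed $\sqrt p\log p$. The explicit constant $7.0508$ would then come from careful bookkeeping of $\sum|\cot(\pi b/p)|/p\le\frac{2}{\pi}\log(3p)$-type estimates together with the $O(1)$ boundary terms, which produce the additive $+2$.
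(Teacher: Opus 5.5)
There is a genuine gap: the proposal lacks the one estimate that finishes the proof, and steps 2--4 cannot replace it.

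Your step 1 is the right reduction, and it is the one the paper uses:
\[
f_\varphi(p)-\frac{(p-1)^2}{4p}=\frac1p\sum_{b\ne0}\widehat{\mathbf 1_U}(b)\,T(b),\qquad T(b)=\sum_{k\in H}e_p\bigl(b\varphi(k)\bigr).
\]
The missing idea is a bound $|T(b)|\ll\sqrt p$ for incomplete quadratic sums, uniform in $b\ne0$ and with no logarithm. This is Lemma \ref{quadratic_bound} (Korobov; Cochrane): $\bigl|\sum_{x=1}^{N}e_p(f(x))\bigr|\le 7.0508\sqrt p+2$ for every quadratic $f$ whose leading coefficient is prime to $p$, and every $1\le N\le p$.

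Since $p\nmid bA$ for $b\ne0$, the lemma applies to $b\varphi$ with $N=\frac{p-1}2$. Combined with $\sum_{b\ne0}|\widehat{\mathbf 1_U}(b)|\le\frac4{\pi^2}p\log(3p)$, it gives the stated inequality in one line. The constants $7.0508$ and $2$ are the constants of that lemma; they do not come from bookkeeping of $\cot$-sums.

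Your first computation, summing over $a$ with $b$ fixed, was not actually a dead end. By Fourier inversion it simply returns $T(b)$. What is needed at that point is a bound on $T(b)$ stronger than completion can give. Completion only yields $\sqrt p\log p$, and that route reproduces Theorem \ref{general}.

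Steps 2--4 do not supply a substitute. By Fourier inversion again, summing over $b\ne0$ first for fixed $a\ne0$ gives $p\sum_{x\in\mathbb F_p}e_p(ax)\mathbf 1_U(\varphi(x))$, up to the sign convention for $a$. So your claim that the $b$-sum is $O(1)$ uniformly in $a$ amounts to saying that these Fourier coefficients of the level set $\varphi^{-1}(U)$ are $O(\sqrt p)$ uniformly. Nothing in the sketch proves this.

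The proposed mechanism also fails, because the partial sums are not bounded. For $\varphi(x)=x^2$, the full sum $\sum_{b\ne0}\legendre bp e_p(-\overline{4b}a^2)$ is a Gauss sum of modulus $\sqrt p$ for every $a\ne0$. Using the completion bound $\ll\sqrt p\log p$ for the partial sums, summation by parts against $1/b$ gives only $O(\log p)$ for each $a$. Summing $1/|a|$ then gives $O(\sqrt p\log^2p)$, which is Theorem \ref{general} again.

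The fallback fails for the same reason. Because of the twist $e_p(c\bar b)$, and $e_p(\alpha b)$ when $B\ne0$, the $b$-sum is not an $L(1,\chi)$-value. Even a bound of $\log p$ for each $a$ still totals $\log^2 p$ after summing over $a$.
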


For polynomials of higher degrees, if we further assume that the polynomial $\varphi(x)$ has reflection symmetry, the incomplete exponential sum can be expressed in terms of complete exponential sums, which allows us to refine the error term.
\begin{theorem}
\label{general_symmetry}
    Let $\varphi(x) \in \mathbb{Z}[x]$ be a given polynomial of degree $m \ge 2$. Suppose there exists a fixed integer $c$ such that $\varphi(c-x) = \varphi(x)$. For any odd prime $p$ for which the reduction of $\varphi$ modulo $p$ is nonconstant, we have 
    $$\left|f_\varphi(p)-\frac{(p-1)^2}{4p}\right|\leq \frac{2}{\pi^2}\left((m-1)\sqrt{p}+2|c|+3\right)\log(3p).$$
    In particular,
    $$f_\varphi(p) = \frac{p}{4} + O_\varphi(\sqrt{p}\log p).$$
\end{theorem}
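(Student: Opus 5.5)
We prove Theorem \ref{general_symmetry} with the same finite Fourier framework as Theorem \ref{general}, but use the reflection symmetry to turn the half-interval sums into complete sums, so that only one logarithm is lost.

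\textbf{Step 1: Fourier expansion.} Let $I=\{1\le k<p/2\}$ and $J=\{r \bmod p: p/2<r<p\}$, so that $\{\varphi(k)/p\}>\frac12$ exactly when $\varphi(k)\bmod p\in J$. Write $e_p(t)=e^{2\pi i t/p}$. Then
\[
f_\varphi(p)=\frac1p\sum_{h=0}^{p-1}\widehat{1_J}(h)\sum_{k\in I}e_p(h\varphi(k)),\qquad \widehat{1_J}(h)=\sum_{r\in J}e_p(-hr).
\]
The term $h=0$ gives $|J|\,|I|/p=(p-1)^2/(4p)$, which is the main term. For $h\neq0$ the bound $|\widehat{1_J}(h)|\le 1/|\sin(\pi h/p)|$ gives $\sum_{h\ne0}|\widehat{1_J}(h)|/p\le \frac{4}{\pi^2}\log(3p)$ up to the constant already used in Theorem \ref{general}. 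It therefore suffices to show that
\[
\Big|\sum_{k\in I}e_p(h\varphi(k))\Big|\le \tfrac12\big((m-1)\sqrt p+2|c|+3\big)
\]
uniformly in $h\not\equiv 0$, with no logarithm. Combining this with the $L^1$ bound above yields the factor $\frac{2}{\pi^2}$.

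\textbf{Step 2: symmetry converts the half sum into a complete sum.} The map $x\mapsto c-x$ is an involution of $\mathbb{Z}/p$ that preserves $\varphi$. Split according to the parity of $c$.

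If $c$ is even, the fixed point is $c/2$. Modulo $p$, the set $\{1,\dots,(p-1)/2\}$ and its reflection $\{c-k\}$ should together cover $\mathbb{F}_p$ minus the fixed point, up to a boundary discrepancy. Concretely, shift so that $k=c/2+j$. Then $\varphi(c/2+j)=\psi(j)$ with $\psi$ even in $j$. Also $\{1\le k<p/2\}$ differs from $\{c/2+1\le k\le c/2+(p-1)/2\}$ by at most $|c|/2+O(1)$ elements at each end. Since $\psi$ is even, $\sum_{j=1}^{(p-1)/2}e_p(h\psi(j))=\frac12\big(\sum_{j\in\mathbb{F}_p}e_p(h\psi(j))-e_p(h\psi(0))\big)$.

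If $c$ is odd, replace $c$ by $c+p$, which is even and is still a reflection center modulo $p$. Equivalently, use the center $(c+p)/2$; because $(c+p)/2$ is near $p/2$, the interval $I$ is again a half-period around it up to $O(|c|)$ boundary terms.

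\textbf{Step 3: Weil and bookkeeping.} The complete sum $\sum_{x\in\mathbb{F}_p}e_p(h\varphi(x))$ has a nonconstant polynomial of degree at most $m$ reduced modulo $p$. Because $\varphi$ is symmetric, the reduction has even degree, hence degree $\ge2$ and coprime to $p$ when $p>m$. Weil's bound then gives absolute value at most $(m-1)\sqrt p$. If the degree happens to be divisible by $p$, I would handle that finitely-many-primes case directly. The boundary terms contribute at most $|c|+3/2$ after halving.

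\textbf{Main obstacle.} I expect the main difficulty to be this boundary accounting: getting exactly $2|c|+3$ uniformly, including the parity of $c$ and the case of small $p$ relative to $|c|$. When $|c|\ge p/2$ the trivial bound $(p-1)/2\le |c|+3/2$ already suffices, so one can assume $|c|<p/2$, which makes the interval shifts legitimate.
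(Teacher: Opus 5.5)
Your proposal is correct and follows the paper's route: you expand only $\mathbf{1}_J$, use the reflection $x\mapsto c-x$ to write $\sum_{x\in I}e_p(a\varphi(x))$ as half the complete sum plus $O(|c|)$ boundary terms, and finish with Lemma~\ref{Weil bound} and Lemma~\ref{estimate_T}. The bookkeeping differs but gives the same result: you recenter $I$ at the fixed point $c/2$ or $(c+p)/2$ (a shift costing at most $|c|$, resp.\ $|c-1|\le|c|+1$, in $|S(a)|$, plus $\tfrac12$ from the center term), whereas the paper applies inclusion--exclusion to $I$ and $\tau(I)$, and both yield exactly $2|c|+3$; the case $p\mid\deg\bar{\varphi}$ that you defer is already covered by the trivial bound built into Lemma~\ref{Weil bound}.
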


\begin{remark}
    This covers the case where $\varphi(x) = x^m$ for an even integer $m$. 
\end{remark}

Returning to Sun's original question, we first focus on the quadratic case $m=2$. In this case, we can derive an exact formula for the counting function using Dirichlet's class number formula. This exact unconditional evaluation allows us to establish a sharp lower bound for the error term via the extreme values of Dirichlet $L$-functions.   
\begin{theorem}\label{m=2}
    For an odd prime $p>3$, we have 
    \[
f_2(p)=
\begin{cases}
    \frac{p-1}{4} & \text{if } p\equiv 1\pmod{4},\\
    \frac{p-1}{4}-\frac{3}{2}h(-p) & \text{if } p\equiv 3\pmod{8},\\
    \frac{p-1}{4}-\frac{1}{2}h(-p) & \text{if } p\equiv 7\pmod{8},
\end{cases}
\]
    where $h(-p)$ denotes the class number of the imaginary quadratic field $\mathbb{Q}(\sqrt{-p})$. 
    Furthermore, there exists an absolute constant $C>0$ such that, for infinitely many primes $p\equiv 3\pmod{4}$, we have 
    $$ \left|f_2(p) - \frac{p}{4}\right| \geq C\sqrt{p}\log\log p. $$
\end{theorem}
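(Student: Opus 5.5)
We first derive the exact formula. Set $n=(p-1)/2$ and, for $1\le k\le n$, write $r_k\in\{1,\dots,p-1\}$ for the least positive residue of $k^2$ modulo $p$. Then $\{k^2/p\}>\frac12$ holds exactly when $r_k>p/2$, so $f_2(p)$ counts the $k\le n$ with $r_k>n$.

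\emph{Case $p\equiv1\pmod 4$.} Here $-1$ is a square modulo $p$. The map $r\mapsto p-r$ therefore permutes the quadratic residues and exchanges those in $(0,p/2)$ with those in $(p/2,p)$. The $n$ values $r_k$ run over all quadratic residues exactly once. Hence exactly half of them exceed $p/2$, which gives $f_2(p)=n/2=(p-1)/4$.

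\emph{Case $p\equiv3\pmod4$.} Let $R$ be the number of quadratic residues in $(0,p/2)$, so that $f_2(p)=n-R$. Write $\chi=\legendre{\cdot}{p}$. Then
\[
R=\sum_{a=1}^{n}\frac{1+\chi(a)}{2}=\frac{n}{2}+\frac12\sum_{a<p/2}\chi(a).
\]
For the character sum we use Dirichlet's classical evaluation for $p\equiv3\pmod4$, $p>3$:
\[
\sum_{a<p/2}\chi(a)=\bigl(2-\chi(2)\bigr)h(-p).
\]
This follows from $h(-p)=-\frac1p\sum_{a=1}^{p-1}a\chi(a)$ together with the splitting of $[1,p-1]$ into even and odd integers. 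Since $\chi(2)=1$ for $p\equiv7\pmod 8$ and $\chi(2)=-1$ for $p\equiv3\pmod 8$, the sum equals $h(-p)$ or $3h(-p)$ respectively. Consequently
\[
f_2(p)=\frac n2-\frac12(2-\chi(2))h(-p),
\]
which is the claimed formula. The only real work in this part is the short derivation of the character-sum identity, and it is standard.

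For the lower bound we have $|f_2(p)-p/4|\ge \frac12 h(-p)-\frac14$ whenever $p\equiv3\pmod4$. It therefore suffices to find infinitely many such primes with $h(-p)\ge C'\sqrt p\log\log p$. Since $h(-p)=\frac{\sqrt p}{\pi}L(1,\chi_{-p})$ for $p>3$, we need infinitely many primes $p\equiv3\pmod 4$ with
\[
L(1,\chi_{-p})\ge c\log\log p .
\]
This is the main obstacle, and it is a known $\Omega$-result: Chowla's theorem for fundamental discriminants, in the version restricted to prime discriminants due to Montgomery--Vaughan and Granville--Soundararajan. The construction runs as follows. Fix $y$ and take the primes $p\equiv3\pmod 4$ with $\chi_{-p}(q)=1$ for every odd prime $q\le y$ and with a prescribed value at $q=2$. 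By quadratic reciprocity these conditions are congruence conditions on $p$ modulo $8\prod_{q\le y}q$. Dirichlet's theorem, in its effective form via Linnik's bound on the least prime in an arithmetic progression, yields such a prime with $\log p\ll y$.

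It remains to check that $L(1,\chi_{-p})\gg\prod_{q\le y}(1-1/q)^{-1}\asymp\log y\asymp\log\log p$. We would do this with an unconditional short-Euler-product approximation, valid for all but a sparse set of moduli, and choose among the many admissible primes one avoiding the exceptional set. Alternatively, we can simply cite the known result that $L(1,\chi_{-p})\ge(e^\gamma+o(1))\log\log p$ for infinitely many primes $p\equiv3\pmod4$. I would cite this rather than reprove it. Substituting it into the exact formula gives the claimed bound $\left|f_2(p)-\frac p4\right|\ge C\sqrt p\log\log p$.
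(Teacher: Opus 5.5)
Your proposal is correct and follows essentially the paper's route. You reduce $f_2(p)$ to the number of quadratic residues in $(0,p/2)$; the paper writes this as $\sum_{n\in J}\frac{1+(n/p)}{2}$ and uses $\legendre{-1}{p}$ to fold $J$ onto $I$. You then apply Dirichlet's identity $\sum_{a<p/2}\legendre{a}{p}=\bigl(2-\legendre{2}{p}\bigr)h(-p)$, and finish by combining $h(-p)=\frac{\sqrt p}{\pi}L(1,\chi_p)$ with a cited $\Omega$-result $L(1,\chi_p)>c\log\log p$ for infinitely many primes $p\equiv 3\pmod 4$. Two minor points: in fact $|f_2(p)-p/4|=\frac14+\frac12\bigl(2-\legendre{2}{p}\bigr)h(-p)\ge\frac12 h(-p)$, so your $-\frac14$ is unnecessary; and once you cite the known $\Omega$-result, as the paper does, the Linnik/short-Euler-product sketch is superfluous.
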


As an immediate consequence of this lower bound, the previously expected $O(\sqrt{p})$ error term cannot be achieved uniformly, which disproves Sun's conjecture.

While the exact evaluation is restricted to $m=2$, we can investigate higher even powers $m \ge 2$ by noticing that the monomial $x^m$ has a rich multiplicative structure over finite fields. This observation allows us to translate the counting problem into the estimation of incomplete Dirichlet character sums. Assuming the Generalized Riemann Hypothesis (GRH), we can generalize the asymptotic behavior observed in $f_2(p)$ to all even integers $m$.

\begin{theorem}[Conditional on GRH]
\label{estimate_GRH}
    Let $m \geq 2$ be an even integer and consider the monomial $\varphi(x) = x^m$. Assuming the Generalized Riemann Hypothesis, for any odd prime $p$, we have
    $$f_m(p) = \frac{p}{4} + O_m(\sqrt{p}\log\log p).$$
\end{theorem}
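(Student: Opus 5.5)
We reduce $f_m(p)$ to incomplete character sums. For $p\nmid$ anything relevant, write $d=\gcd(m,p-1)$; the map $k\mapsto k^m$ on $\mathbb F_p^\times$ has image the subgroup $H$ of $d$-th powers. For $y\in\mathbb F_p^\times$, the number of $x$ with $x^m\equiv y$ is $\sum_{\chi^d=\chi_0}\chi(y)$, the sum running over the $d$ characters of order dividing $d$. Since $m$ is even, $k^m=(p-k)^m$, so the count over $1\le k<p/2$ is half the count over all $k\in\mathbb F_p^\times$. We therefore get
\[
f_m(p)=\frac12\sum_{p/2<y<p}\ \sum_{\chi^d=\chi_0}\chi(y)
=\frac{p-1}{4}+\frac12\sum_{\substack{\chi^d=\chi_0\\ \chi\ne\chi_0}} S_\chi,\qquad S_\chi:=\sum_{p/2<y<p}\chi(y).
\]
Since $d\le m$, there are at most $m-1$ nontrivial characters, and it suffices to show that $S_\chi=O(\sqrt p\log\log p)$ for each nontrivial $\chi$ under GRH, with an implied constant that is absolute.

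To handle $S_\chi$ we use the Pólya Fourier expansion of the half-interval sum. Writing $S_\chi=\sum_{0<y<p}\chi(y)-\sum_{0<y<p/2}\chi(y)$, the full sum vanishes, so we need $\sum_{y<p/2}\chi(y)$. The standard evaluation gives exact formulas. If $\chi$ is odd,
\[
\sum_{y<p/2}\chi(y)=\frac{\tau(\chi)}{\pi i}\bigl(2-\bar\chi(2)\bigr)L(1,\bar\chi)
\]
up to normalisation. If $\chi$ is even, the sum is $0$, using $\chi(-1)=1$ and the symmetry $y\mapsto p-y$, which splits the full sum evenly. Alternatively, one can expand the indicator of $(0,p/2)$ in additive characters, $\sum_{1\le|h|<p}c_h\,e(hy/p)$ with $c_h\ll 1/|h|$ supported on odd $h$. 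Summing against $\chi$ turns each term into a Gauss sum times $\bar\chi(h)$, so that
\[
\sum_{y<p/2}\chi(y)=\tau(\chi)\sum_h c_h\bar\chi(h).
\]
The completed sum over $h$ converges to a constant times $(1-\bar\chi(2)/2)\cdots L(1,\bar\chi)$. In either route, since $|\tau(\chi)|=\sqrt p$, we get $|S_\chi|\ll\sqrt p\,|L(1,\bar\chi)|$.

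It remains to use Littlewood's GRH bound $|L(1,\psi)|\ll\log\log q$ for a nontrivial character $\psi$ modulo $q$, applied with $q=p$; here $\bar\chi$ is primitive since $p$ is prime. This yields $S_\chi\ll\sqrt p\log\log p$. Summing over at most $m-1$ characters and noting $\frac{p-1}{4}=\frac p4+O(1)$ gives the theorem.

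The main obstacle is the exact evaluation of the incomplete sum over $(p/2,p)$ via Fourier expansion without losing a $\log p$. The truncated expansion used in Theorem~\ref{general} loses a factor $\log p$ from $\sum 1/|h|$. Instead we should use the convergent infinite series, equivalently Pólya's exact identity, which produces $L(1,\bar\chi)$ itself rather than a partial sum $\sum_{h<p}\bar\chi(h)/h$. If we used partial sums instead, GRH would still control them: under GRH, $\sum_{h\le p}\bar\chi(h)/h=L(1,\bar\chi)+O(p^{-1/2}\log^2p)$. Either way the bound $\log\log p$ survives.
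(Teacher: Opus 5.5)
Your proof is correct, but it takes a different route from the paper at the key analytic step. The reduction is the same as the paper's. It uses the evenness of $x^m$, the character expansion of $N_m(y)$ over $\chi^m=\chi_0$ (the same set as your characters of order dividing $d=\gcd(m,p-1)$), and isolates $\frac{p-1}{4}$ plus at most $m-1$ incomplete sums $S_\chi$.

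The paper then bounds each $S_\chi$ directly with the Montgomery--Vaughan theorem (Lemma \ref{dirichlet character}). Under GRH this gives $\sum_{n\le x}\chi(n)\ll\sqrt p\log\log p$ uniformly in $x$.

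You instead exploit the fact that the interval is exactly a half-interval. Even characters give $S_\chi=0$ by $y\mapsto p-y$. For odd $\chi$, P\'olya's exact identity $\sum_{y<p/2}\chi(y)=\frac{\tau(\chi)}{\pi i}\bigl(2-\bar\chi(2)\bigr)L(1,\bar\chi)$ reduces everything to the GRH bound $|L(1,\bar\chi)|\ll\log\log p$.

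Your route needs a weaker and more classical conditional input, and it gives more. You get an exact formula for $f_m(p)$ that generalizes the class-number formula of Theorem \ref{m=2}. It shows $f_m(p)=\frac{p-1}{4}$ exactly whenever $-1$ is an $m$-th power mod $p$, i.e.\ when all relevant characters are even. It also makes clear that the $\log\log p$ comes from extreme values of $L(1,\chi)$. The paper's route is shorter and insensitive to the shape of the interval, but it relies on a substantially deeper theorem.

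Two small remarks:
\begin{itemize}
\item Littlewood's theorem as cited in the paper concerns real characters. For complex $\bar\chi$ you should cite the general GRH bound $|L(1,\chi)|\ll\log\log q$, which follows by the same method.
\item Your ``alternative'' expansion over $1\le|h|<p$ with coefficients supported on odd $h$ conflates the finite Fourier expansion on $\mathbb F_p$ with the Fourier series of the indicator of $(0,\frac12)$ on $\mathbb R/\mathbb Z$. Only the latter has odd support. Since the P\'olya route is complete on its own, this does not affect the proof.
\end{itemize}
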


Finally, we consider a natural generalization of the counting function discussed above. The improved bound for even polynomials relies on pairing $x$ and $-x$, which corresponds to the group of square roots of unity \(\{\pm1\}\). A natural question is whether we can extend this idea to higher-order symmetries involving the $d$-th roots of unity. To answer this, we generalize the half-interval to a representative set $A$ (a fundamental domain) modulo $p$, and establish a similar uniform bound of the counting function for the polynomial $x^d$.

\begin{theorem}
\label{fundamental domain}
     Let $d \geq 2$ be a fixed integer. Let $p$ be a prime such that $p \equiv 1 \pmod d$, and let $\omega \in \mathbb{F}_p^*$ be a primitive $d$-th root of unity. Let $A \subset \{1, 2, \dots, p-1\}$ be a set of representatives forming a fundamental domain for the action of the group $G_d = \{1, \omega, \dots, \omega^{d-1}\}$ on $\mathbb{F}_p^*$, meaning $|A| = \frac{p-1}{d}$ and $\mathbb{F}_p^* \equiv \bigsqcup_{i=0}^{d-1} \omega^i A \pmod p$. Then the counting function $f_A(p) := \#\left\{x \in A : \left\{ \frac{x^d}{p} \right\} > \frac{1}{2} \right\}$ satisfies
     $$\left|f_A(p)-\frac{(p-1)^2}{2dp}\right|\leq\frac{4}{d\pi^2}\left((d-1)\sqrt{p}+1\right)\log(3p).$$
     In particular, 
     $$f_A(p)=\frac{p}{2d}+O_d(\sqrt{p}\log p).$$
\end{theorem}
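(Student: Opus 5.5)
The plan is to exploit the $G_d$-invariance of $x\mapsto x^d$ to turn the sum over the fundamental domain $A$ into a complete sum over $\mathbb F_p^*$. Write $\mathbf 1_{\{t/p\}>1/2}$ for $t\in\{0,\dots,p-1\}$ as the indicator of the residue set $I=\{(p+1)/2,\dots,p-1\}$, which has $(p-1)/2$ elements. Since $(\omega^i x)^d\equiv x^d \pmod p$ and $\mathbb F_p^*=\bigsqcup_i \omega^i A$, every $y\in\mathbb F_p^*$ satisfies $y^d\equiv x^d$ for exactly one $x\in A$, and each $x\in A$ arises from exactly $d$ values of $y$. Hence
\[
f_A(p)=\frac1d\,\#\{1\le y\le p-1:\ y^d \bmod p\in I\},
\]
and the choice of $A$ disappears.

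Next, expand the indicator of $I$ by the finite Fourier expansion $\mathbf 1_I(t)=\frac1p\sum_{h=0}^{p-1}\sum_{s\in I}e(h(t-s)/p)$. The $h=0$ term gives $\frac1d\cdot\frac{(p-1)}{p}\cdot\frac{p-1}{2}=\frac{(p-1)^2}{2dp}$, which is exactly the stated main term. For $h\neq0$ the contribution is
\[
\frac{1}{dp}\sum_{h=1}^{p-1}\Bigl(\sum_{s\in I}e(-hs/p)\Bigr)\Bigl(\sum_{y=1}^{p-1}e(hy^d/p)\Bigr).
\]
The inner $y$-sum is complete: it equals $\sum_{y\in\mathbb F_p}e(hy^d/p)-1$. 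By the Weil bound for $x^d$ with $p\nmid h$ and $d<p$ (or the Gauss-sum evaluation $\sum_{\chi^d=1,\chi\ne1}\bar\chi(h)G(\chi)$), its absolute value is at most $(d-1)\sqrt p+1$.

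The remaining factor is the incomplete geometric sum over the interval $I$, bounded by $\min\bigl((p-1)/2,\,1/(2\|h/p\|)\bigr)$. The standard estimate $\sum_{h=1}^{p-1}|\sum_{s\in I}e(hs/p)|\le \frac{4}{\pi^2}p\log(3p)$ should be the same lemma used for Theorems \ref{general}--\ref{general_symmetry}, and I would reuse it. This gives
\[
\Bigl|f_A(p)-\frac{(p-1)^2}{2dp}\Bigr|\le \frac{1}{dp}\bigl((d-1)\sqrt p+1\bigr)\cdot\frac{4}{\pi^2}p\log(3p),
\]
which is the claim. The asymptotic form follows by replacing $(p-1)^2/(2dp)$ with $p/(2d)+O(1)$.

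The only delicate point is the constant: I need the sharp form of the interval-sum lemma with constant $4/\pi^2$. If it is not already available, I would prove it by bounding $|\sin(\pi h L/p)/\sin(\pi h/p)|$ with $L=(p-1)/2$, using $\sin(\pi u)\ge 2u$ on $[0,1/2]$ and a careful harmonic-sum estimate. Apart from that, the argument is mechanical.
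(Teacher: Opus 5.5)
Your proposal is correct and is essentially the paper's argument. The paper Fourier-expands the indicator first and then uses the orbit decomposition to get $S_A(a)=(W(a)-1)/d$, whereas you apply the orbit decomposition to the count before expanding. Both routes amount to the same computation and give the identical bound via the Weil bound and Lemma~\ref{estimate_T}.
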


The main idea of our proofs is to use finite Fourier expansion. We express the indicator functions of the required intervals as finite Fourier series, which directly translates the counting problem into the estimate of exponential sums. For general polynomials, we estimate these sums using the well-known Weil bound. For quadratic or symmetric cases, we improve the estimate by
using bounds for incomplete Gauss sums
 or symmetry properties respectively. For the monomial case, we use Dirichlet characters instead, which allows us to apply the Generalized Riemann Hypothesis to get a better error term.

The remainder of this paper is organized as follows. In Section \ref{preliminaries}, we list the necessary preliminary lemmas. Section \ref{proof of main results} provides the unconditional proofs for general, quadratic, and symmetric polynomials (Theorems \ref{general}, \ref{quadratic}, and \ref{general_symmetry}). Section \ref{original case} focuses on the monomial case, including the specific results for $m=2$ and the conditional bound under Generalized Riemann Hypothesis (Theorem \ref{m=2} \& Theorem \ref{estimate_GRH}). Finally, in Section \ref{section_generalizaion}, we prove the generalized result for the representative set $A$ (Theorem \ref{fundamental domain}).

\medskip
\noindent\textbf{Notation.} The following notations will be used throughout this paper:
\begin{itemize}
    \item $p$ always denotes an odd prime;
    \item $\{x\}$ denotes the fractional part of a real number $x$;
    \item $\bar{\varphi}$ denotes the reduction of $\varphi$ modulo $p$;
    \item $f = O(g)$ and $f \ll g$ mean $|f| \le cg$ for some unspecified positive constant $c$. If the implied constant $c$ depends on certain parameters, we indicate this dependence by adding subscripts, such as $O_\varphi(g)$, $\ll_\varphi$, $O_m(g)$, or $\ll_d$. In particular, the subscript $\varphi$ (as in $O_\varphi$ or $\ll_\varphi$) means that the implied constant depends on the degree and the coefficients of the polynomial $\varphi$;
    \item For a finite set $S$, $|S|$ (or $\# S$) denotes its cardinality, and $\mathbf{1}_S(x)$ denotes its indicator function, which equals $1$ if $x \in S$ and $0$ otherwise;
    \item $\legendre{\cdot}{\cdot}$ denotes the Legendre symbol;
    \item $\mathbb{F}_p$ denotes the finite field with $p$ elements, and $\mathbb{F}_p^*$ denotes the multiplicative group of all non-zero elements in $\mathbb{F}_p$;
    \item $e_p(x) := e^{2\pi i x/p}$ denotes the standard additive character.
\end{itemize}

\section{Preliminaries}
\label{preliminaries}
In this section, we collect the necessary tools for our proofs. We divide them into two parts: estimates for additive exponential sums, and results concerning Dirichlet characters and $L$-functions.

First, we state three lemmas about additive exponential sums, dealing with complete sums, finite Fourier expansions, and incomplete quadratic sums, respectively. The following well-known Weil bound is important for estimating the sum over a complete residue system.
\begin{lemma}[Weil bound, \cite{FF_LN}, Theorem 5.38]
\label{Weil bound}
    Let $f(x) \in \mathbb{F}_p[x]$ be a polynomial of degree $m \geq 1$. If $(p,m)\neq(2,2)$, then
    $$\left| \sum_{x=0}^{p-1} e_p(f(x)) \right| \le (m-1)\sqrt{p}.$$
\end{lemma}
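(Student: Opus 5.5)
Since the statement is quoted from \cite{FF_LN}, in the paper itself we would simply cite it. If we were to prove it, we would follow the classical route through the $L$-function attached to $e_p(f(x))$ and the Riemann Hypothesis for curves over finite fields.

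\textbf{Step 1: reduce to $p\nmid \deg f$.} As functions on $\mathbb{F}_p$, $x^p$ and $x$ agree. Reducing $f$ modulo $x^p-x$ therefore gives a polynomial $g$ with the same exponential sum and with $\deg g\le \min(m,p-1)$.
\begin{itemize}
\item If $g$ is constant, then $|\sum_x e_p(f(x))|=p$. When $m\ge 2$ this is at most $(m-1)\sqrt p$: for $m\ge 3$ use $2\sqrt p\ge p$ when $p\le 4$ together with the fact that a nonconstant $g$ is forced otherwise, and check the small cases directly. The case $m=1$ never reduces, since $\deg f<p$.
\item Otherwise $1\le d:=\deg g\le p-1$, so $p\nmid d$. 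Moreover $(d-1)\sqrt p\le (m-1)\sqrt p$, so it suffices to prove the bound for $g$.
\end{itemize}
When $d=1$ the sum is $0$, since it is a nontrivial linear character summed over $\mathbb{F}_p$.

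\textbf{Step 2: rationality of the $L$-function.} For $d\ge 2$ with $p\nmid d$, define
\[
S_r=\sum_{x\in\mathbb{F}_{p^r}} e_p\bigl(\mathrm{Tr}_{\mathbb{F}_{p^r}/\mathbb{F}_p}\, g(x)\bigr)
\quad\text{and}\quad
L(T)=\exp\Bigl(\sum_{r\ge1} S_r T^r/r\Bigr).
\]
Writing $L(T)$ as an Euler product over monic polynomials, the standard elementary argument shows that $L(T)$ is a polynomial of degree exactly $d-1$. The leading coefficient is a Gauss-type sum, and this is where $p\nmid d$ enters. Hence $L(T)=\prod_{j=1}^{d-1}(1-\omega_jT)$ and $S_r=-\sum_j\omega_j^r$. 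In particular $S_1=-\sum_j\omega_j$, so the lemma reduces to showing $|\omega_j|\le\sqrt p$ for every $j$.

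\textbf{Step 3: the Riemann Hypothesis input.} Consider the Artin--Schreier curve $C:\ y^p-y=g(x)$, of genus $(p-1)(d-1)/2$. Its point count over $\mathbb{F}_{p^r}$ is $p^r+\sum_{\psi\neq 1}S_r(\psi)$, where the sum runs over the nontrivial additive characters $\psi(t)=e_p(at)$. The reciprocal roots of all these $L$-functions together are exactly the Frobenius eigenvalues of $C$.

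By Weil's theorem, or by the elementary Stepanov--Bombieri method, one has $|\#C(\mathbb{F}_{p^r})-p^r-1|\ll_C p^{r/2}$ for all $r$. The usual trick then applies: for the generating function $\sum_r(\sum_j\omega_j^r)T^r$, a bound of order $p^{r/2}$ forces every $\omega_j$ to have $|\omega_j|\le\sqrt p$.

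One subtlety is that the bound on the total count must be transferred to each individual character $\psi$. This can be done either by using the Galois action $a\mapsto$ Frobenius conjugation, which permutes the $\psi$'s, or by running Stepanov's method directly on the equation $y^p-y=a\,g(x)$ for each $a$.

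\textbf{Main obstacle.} The hard step is Step 3, the Riemann Hypothesis for curves. I would first try Bombieri's version of Stepanov's method: construct an auxiliary polynomial that vanishes to high order at the rational points, which yields the upper bound $N_r\le p^r+O(p^{r/2})$. Applying this to twists gives the matching lower bound. Steps 1 and 2 are routine bookkeeping by comparison.
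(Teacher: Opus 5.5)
Your route differs from the paper's, and in outline it is legitimate. The paper does not prove the classical bound. It cites Lidl--Niederreiter for $p\nmid m$ and adds only one observation: when $p\mid m$ one has $m\ge p$, so the trivial estimate $p\le (m-1)\sqrt p$ already suffices. That is where $(p,m)\neq(2,2)$ enters. You instead reduce $f$ modulo $x^p-x$. When $g$ is nonconstant this guarantees $p\nmid d=\deg g$ and even gives the sharper bound $(d-1)\sqrt p$. You then sketch the textbook proof of Weil's bound: rationality of $L(T)$ with $\deg L\le d-1$, plus the Riemann Hypothesis for the Artin--Schreier curve. This makes the argument self-contained at a large cost in length. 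For the paper's purposes, the citation plus the trivial bound is all that is needed.

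There is, however, a false step in the degenerate case of Step 1, which is the analogue of the only thing the paper actually argues.
\begin{itemize}
\item You claim that for $p>4$ ``a nonconstant $g$ is forced''. This is wrong: $f=x^5-x$ over $\mathbb{F}_5$ has $m=5$ and reduces to $g=0$, so the sum equals $5$.
\item The correct observation is that a polynomial of degree $<p$ is its own reduction. So $g$ constant with $f$ nonconstant forces $m\ge p$.
\item Then $p\le(p-1)\sqrt p\le(m-1)\sqrt p$ for $p\ge 3$. For $p=2$ one needs $m\ge 3$, which is exactly why $(2,2)$ is excluded: $f=x^2+x$ over $\mathbb{F}_2$ gives the sum $2>\sqrt 2$.
\end{itemize}
With this one-line replacement, Step 1 becomes correct.

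Separately, the ``subtlety'' in Step 3 is not one. Since $\prod_{a\neq 0}L(\psi_a,T)$ is the numerator of the zeta function of $C$, the radius-of-convergence argument applied to $\#C(\mathbb{F}_{p^r})-p^r-1$ bounds every Frobenius eigenvalue of $C$. That bounds every $\omega_j$ of each individual $L(\psi_a,T)$ at once. This is fortunate, because your second proposed transfer would not separate the characters anyway. The curves $y^p-y=a\,g(x)$ with $a\in\mathbb{F}_p^*$ are all isomorphic to $C$ over $\mathbb{F}_p$ via $y\mapsto ay$, so running Stepanov's method on them separately gives nothing new.
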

When $p\nmid m$, this is the classical Weil bound cited above. If $p\mid m$ and $(p,m)\ne(2,2),$ then the result follows from the trivial estimate
$\left|\sum_{x\in\mathbb F_p}e_p(f(x))\right|\leq p
\leq(m-1)\sqrt p$.

To handle the condition that our variables belong to a specific interval, we use finite Fourier expansions. The following lemma estimates the Fourier coefficients of the indicator function of an interval.

\begin{lemma}[\cite{Bourgain}, Lemma 11.1]
\label{estimate_T}
    Let $H$ be any subset of $\mathbb{F}_p$ consisting of consecutive integers. Then its indicator function can be expanded as $\mathbf{1}_H(t) = \frac{1}{p} \sum_{a \in \mathbb{F}_p} T_H(a) e_p(at)$, where $T_H(a) = \sum_{y \in H} e_p(-ay)$. Furthermore, we have
    $$\sum_{a \neq 0} |T_H(a)| \leq \frac{4}{\pi^2} p \log (3p).$$
\end{lemma}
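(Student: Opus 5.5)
The expansion itself is just orthogonality of additive characters. For $t\in\mathbb F_p$,
\[
\frac1p\sum_{a\in\mathbb F_p}T_H(a)e_p(at)=\sum_{y\in H}\frac1p\sum_{a}e_p(a(t-y))=\mathbf 1_H(t).
\]
So the content of the lemma is the $\ell^1$ bound on the nonzero coefficients. The constant $4/\pi^2$ is the classical constant of the Lebesgue constants of the Dirichlet kernel. I therefore intend to compute $T_H(a)$ in closed form and control the resulting discrete $L^1$ norm of a Dirichlet kernel.

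\textbf{Reduction and closed form.} Write $H=\{u,u+1,\dots,u+L-1\}$ with $1\le L\le p$. The case $L=p$ is trivial, since then $T_H(a)=0$ for $a\ne0$. For $a\ne0$ we have $\sum_{y\in\mathbb F_p}e_p(-ay)=0$, so $T_{\mathbb F_p\setminus H}(a)=-T_H(a)$. The complement of an interval is again an interval cyclically, so we may assume $L\le p/2$. Summing the geometric series gives
\[
|T_H(a)|=\left|\frac{\sin(\pi aL/p)}{\sin(\pi a/p)}\right|,
\]
independently of $u$. Using $a\mapsto p-a$, this reduces the claim to
\[
2\sum_{1\le a\le (p-1)/2}\frac{|\sin(\pi aL/p)|}{\sin(\pi a/p)}\le\frac{4}{\pi^2}p\log(3p).
\]

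\textbf{Splitting the range of $a$.} I will split $a$ into two ranges.
\begin{itemize}
\item For $a\le p/L$, use the trivial bound $|T_H(a)|\le L$. This contributes at most about $2L\cdot p/L=O(p)$, which is absorbed by the $\log 3$ part of $\log(3p)$.
\item For $p/L<a\le(p-1)/2$, use $\sin(\pi a/p)\ge 2a/p$, which bounds the summand by $\frac{p}{2a}|\sin(\pi aL/p)|$.
\end{itemize}
The naive bound $|\sin|\le 1$ in the second range would only give constant $1$ in front of $p\log p$. To obtain $4/\pi^2$ one must exploit that $|\sin(\pi aL/p)|$ has average $2/\pi$.

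\textbf{Averaging over blocks.} Partition the range of $a$ into consecutive blocks of length about $p/L$, i.e.\ one period of $a\mapsto|\sin(\pi aL/p)|$. On each block, $1/a$ varies by a factor $1+O(1/k)$ on the $k$-th block. The average of $|\sin(\pi aL/p)|$ over a full period of lattice points is $2/\pi+O(L/p)$. This follows by a Riemann-sum comparison, or exactly by evaluating $\sum|\sin(\pi j/q)|$ via $\cot(\pi/2q)$ when $\gcd(L,p)=1$ permutes residues. Combining, the second range contributes
\[
2\cdot\frac p2\cdot\frac2\pi\sum_{p/L<a\le p/2}\frac1a\cdot\frac{\pi}{\,\cdot\,}\cdots
\]
More precisely, it contributes $\frac{4}{\pi^2}p\log L+O(p)$ once the $\sin(\pi a/p)$ versus $\pi a/p$ discrepancy is tracked, with $\csc x\le 1/x+O(x)$.

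\textbf{Main obstacle.} The hardest step is making the averaging explicit with constants small enough that all the $O(p)$ losses fit inside $\frac{4}{\pi^2}p\log 3$. This means tracking the boundary blocks, the monotonicity error in $1/a$, and the error in $\csc x$ versus $1/x$. I would first try to bound each block sum by comparing with an integral of $|\sin|/x$, using concavity of $\sin$ on each half-period. If the explicit constant proves too tight, I would instead cite Bourgain's argument directly, since the lemma is quoted from \cite{Bourgain}.
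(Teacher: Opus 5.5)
The paper gives no proof of this lemma; it quotes it from \cite{Bourgain}, so your fallback of citing it is exactly what the paper does. Your orthogonality step, the reduction to $L\le p/2$ via complements, and the closed form $|T_H(a)|=|\sin(\pi aL/p)/\sin(\pi a/p)|$ are all correct. The self-contained estimate you sketch, however, has a genuine gap at its central step: the block-averaging argument cannot give the constant $4/\pi^2$ uniformly in $L$.

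\textbf{The error term does not sum to $O(p)$.} Take a per-block error of $O(L/p)$ in the mean of $|\sin(\pi aL/p)|$. Multiply it by the $\asymp p/L$ points of a block and by the weight $\asymp L/k$ carried by the $k$-th block. Summed over the $\asymp L$ blocks, this gives $O(L\log L)$, which is of the same order as the main term when $L\asymp p$. The exact $\cot(\pi/2q)$ evaluation you mention does not help here. It applies to a full period of length $p$, where $a\mapsto aL$ permutes residues, not to blocks of length $p/L$.

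\textbf{The claimed asymptotic is false.} The contribution $\frac{4}{\pi^2}p\log L+O(p)$ cannot hold as an asymptotic. For $L=(p-1)/2$ one computes exactly
\[
\sum_{a\neq0}|T_H(a)|=\sum_{a\ \mathrm{odd},\,a<p}\csc\frac{\pi a}{2p}=\frac{1}{\pi}p\log p+O(p).
\]
The underlying reason is that block means of $|\sin(\pi aL/p)|$ are not $2/\pi$. For $L=(p-1)/2$ the two-point blocks have mean $\approx\frac12$ for small $a$, and $\approx 1/\sqrt2>2/\pi$ near $a\approx p/2$. So $2/\pi$ is an upper bound only after averaging over the slowly drifting phase, which no block-by-block estimate captures.

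\textbf{The small-$a$ bookkeeping also fails.} For $L=(p-1)/2$ the range $a\le p/L$ is $\{1,2\}$, and the trivial bound gives $2\cdot 2L=2(p-1)$. Even the true value $2(|T_H(1)|+|T_H(2)|)\approx 2p/\pi$ exceeds the $\frac{4\log 3}{\pi^2}p\approx 0.45p$ into which you propose to absorb it.

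\textbf{What is missing} is a one-sided bound that is uniform in $L$. One way to get it is to insert
\[
|\sin x|=\frac{2}{\pi}-\frac{4}{\pi}\sum_{k\ge1}\frac{\cos 2kx}{4k^2-1}
\]
and set $F(m)=\sum_{a=1}^{p-1}\cos(2\pi am/p)/\sin(\pi a/p)$. Every nonconstant coefficient is negative and $\sum_{k\ge1}(4k^2-1)^{-1}=\frac12$, so
\[
\sum_{a\neq0}|T_H(a)|=\frac{2}{\pi}F(0)-\frac{4}{\pi}\sum_{k\ge1}\frac{F(kL)}{4k^2-1}\le\frac{2}{\pi}F(0)+\frac{2}{\pi}\max_m\bigl(-F(m)\bigr).
\]
The identity
\[
F(m)-F(m+1)=2\sum_{a=1}^{p-1}\sin\frac{\pi a(2m+1)}{p}=2\cot\frac{\pi(2m+1)}{2p}
\]
shows that $F$ is minimal at $m=\frac{p-1}{2}$. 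There
\[
F=2\sum_{a=1}^{(p-1)/2}(-1)^a\cot\frac{\pi a}{p}=-\frac{2\log 2}{\pi}p+O(1),
\]
while, with $\gamma$ Euler's constant,
\[
F(0)=\sum_{a=1}^{p-1}\csc\frac{\pi a}{p}=\frac{2p}{\pi}\bigl(\log p+\gamma+\log\tfrac{2}{\pi}\bigr)+O(1).
\]
Together these give
\[
\sum_{a\neq0}|T_H(a)|\le\frac{4}{\pi^2}p\bigl(\log p+\gamma+\log\tfrac{4}{\pi}\bigr)+O(1),
\]
and $\gamma+\log\frac{4}{\pi}\approx 0.82<\log 3$. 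Making the $O(1)$ terms explicit and checking small $p$ by hand then yields the lemma as stated, with no splitting of the range of $a$.
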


For quadratic polynomials, we rely on the following uniform bound for incomplete Gauss sums to improve the estimate.

\begin{lemma}[{\cite[Page 53, Theorem]{Ko} and \cite[Equation (1.5)]{Cochrane}}]
\label{quadratic_bound}
    Let $f(x) = Ax^2 + Bx + C \in \mathbb{Z}[x]$ be a quadratic polynomial. For any odd prime $p \nmid A$ and $1 \leq N \leq p$, the incomplete exponential sum satisfies
    \[ \left| \sum_{x=1}^{N} e_p(f(x)) \right| \leq 7.0508\sqrt{p} + 2 \ll \sqrt{p}. \]
\end{lemma}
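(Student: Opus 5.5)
The plan is to prove the bound by exploiting the exact evaluability of complete quadratic Gauss sums together with a reciprocity (Poisson summation) step. Completing \emph{only} via Lemma \ref{estimate_T} and Lemma \ref{Weil bound} would give $\sqrt p\log p$, so it cannot reach the uniform $O(\sqrt p)$. First I normalize. Since $p\nmid A$, I write $4A f(x)=(2Ax+B)^2+(4AC-B^2)$. The constant term only contributes a unimodular factor, and the linear substitution keeps the sum as a sum of $e_p(a x^2+bx)$ over an interval of consecutive integers of length $N\le p$. So it suffices to bound $S(a,b;N)=\sum_{x=1}^{N}e_p(ax^2+bx)$ uniformly in $p\nmid a$, $b$, and $1\le N\le p$. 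By translating $x$ and using periodicity modulo $p$, I may also assume $N\le p$ and replace $b$ by any convenient representative.

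The core step is a van der Corput / Hardy--Littlewood type transformation. I apply Poisson summation to $\sum_{x}\mathbf 1_{[1,N]}(x)\,e((ax^2+bx)/p)$, viewing the phase as the real function $g(t)=(\alpha t^2+\beta t)$ with $\alpha=a/p$ and $\beta=b/p$. Here I choose the representative $a$ with $|a|<p/2$, so that $|g'|$ over the interval is controlled. By stationary phase, each frequency $k$ with $g'(t)=k$ for some $t$ in the interval contributes $\frac{1}{\sqrt{2\alpha}}e(\cdot)$ plus a boundary error of size $O(1/\|g'\|)$ at the endpoints. Frequencies without a stationary point contribute only boundary terms. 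The number of stationary frequencies is about $2\alpha N+1$. Hence
\[
S=\frac{e^{\pi i/4}}{\sqrt{2\alpha}}\sum_{k}e\!\left(-\frac{(k-\beta)^2}{4\alpha}\right)+O(\text{boundary terms}),
\]
and the new sum is again an incomplete quadratic sum, with $\alpha$ replaced by $-1/(4\alpha)$ and length $\approx 2\alpha N$. Since $\alpha=a/p$ is rational, the dual phase is $-\overline{4a}k^2/p$ modulo integers after using $p$-periodicity, i.e.\ the same type of sum modulo $p$. This matches the exact Gauss sum identity $\sum_{x\bmod p}e_p(ax^2+bx)=\legendre{a}{p}\varepsilon_p\sqrt p\,e_p(-\overline{4a}b^2)$.

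To close the argument, I avoid iteration by a trick. If $N\le \sqrt p$ the trivial bound $N$ suffices. Otherwise I split according to the size of $\alpha N$: if $|a|N/p$ is small, the dual sum has length $\ll |a|N/p$ and the prefactor is $\sqrt{p/|a|}$. The product is then $\ll \sqrt{|a|}N/\sqrt p\le \sqrt p\cdot\sqrt{|a|N^2/p^2}$, which is acceptable when $|a|N^2\le p^2$. In the remaining range I replace $a$ by $\overline{4a}$-type dual coefficients using the complete-sum identity. Writing $S=\sum_{x\bmod p}-\sum_{x\notin[1,N]}$ reduces $N$ to $p-N$. Combining this with the choice of the optimal representative of $a$, I reach a configuration where one transformation suffices. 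The boundary terms are of the shape $\sum_k \min(N,1/\|\cdot\|)$ restricted to $O(1)$ endpoint frequencies, hence $O(1)$ in total. This should produce the additive constant $2$.

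The main obstacle I expect is the explicit constant $7.0508$. The stationary phase step has to be done with fully explicit error terms, e.g.\ via the Kuzmin--Landau lemma and an explicit truncated Poisson formula, rather than with $O$'s. The case split between the direct bound, the dual bound, and the complementary interval must then be optimized numerically. I would first try to follow Korobov's explicit version of the reciprocity formula. If tracking constants proves too messy, I would settle for the qualitative $\ll\sqrt p$ statement, which is all that is used downstream, and cite \cite{Cochrane} for the numerical value.
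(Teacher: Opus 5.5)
The paper gives no proof of this lemma; it is quoted from Korobov and Cochrane. So the question is whether your self-contained argument closes, and it does not. The gap is exactly the step where you ``avoid iteration by a trick.''

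\textbf{The two dualities are conflated.} Poisson summation with stationary phase for $e(\alpha t^2+\beta t)$, with $\alpha=a/p$ and $\beta=b/p$, produces the dual phase
$-(k-\beta)^2/(4\alpha)=-\frac{p}{4a}k^2+\frac{b}{2a}k+\mathrm{const}$.
This is a quadratic sum with denominator $4|a|$ and coefficient $-p$ (the reciprocity $a/p\mapsto -p/4a$ behind Landsberg--Schaar). It is not a sum with phase $-\overline{4a}k^2/p$.

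That phase comes instead from finite Fourier completion, $S=\frac1p\sum_{h}G(a,b+h)\sum_{y\le N}e_p(-hy)$ with $G(a,c)=\sum_{x\bmod p}e_p(ax^2+cx)$. This transformation is an exact symmetry of the problem. It rewrites $S$ as $O(\sqrt p)$ plus $\sqrt p$ times sums like $\sum_{0<|h|<p/2}e_p(-\overline{4a}h^2+dh)/h$. A uniform $O(1)$ bound for those sums is \emph{equivalent} to the lemma itself. So ``replacing $a$ by $\overline{4a}$-type dual coefficients'' cannot produce the missing saving.

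\textbf{The case split does not cover everything.} One Poisson step, with the dual sum bounded trivially, gives $|S|\ll\sqrt{|a|/p}\,N+\sqrt{p/|a|}+\log p$ (with $|a|<p/2$). This is $O(\sqrt p)$ only if $|a|N^2\ll p^2$.

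Now take $|a|\asymp p$ and $N\asymp p$. This includes $N=(p-1)/2$, the case actually needed in Theorem~\ref{quadratic}, where the leading coefficient $aA$ runs over all of $\mathbb{F}_p^*$. There the dual sum has length comparable to its own modulus and prefactor $\asymp1$, so you are back where you started. Passing to $p-N$, to $-a$, or (via $e(x^2/2)=e(x/2)$) from $\alpha$ to $\alpha-\tfrac12$ changes none of these orders of magnitude. When $a/p$ has only small partial quotients, no single rational approximation short-circuits the problem.

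\textbf{The missing ingredient is the iteration itself.} Apply the reciprocity step repeatedly along the continued fraction of $a/p$, reducing $\alpha$ modulo $\tfrac12$ at each stage (this is where the linear term is needed). Then the lengths at least halve at each stage and the moduli $m_i$ shrink accordingly. The error committed at stage $i$ is of size about $\sqrt{p/m_{i+1}}$, plus the current prefactor times a logarithm. The geometric decay makes the total $O(\sqrt p)$. This Hardy--Littlewood-type descent, with explicit constants, is what the cited results supply.

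\textbf{Two smaller points.}
First, the truncated Poisson formula leaves endpoint errors of size $\min(\lambda^{-1/2},\|g'\|^{-1})$ plus a logarithm at each stage, not $O(1)$. So the additive $2$ does not come out the way you suggest.
Second, the explicit constant $7.0508$ is itself used downstream, in the explicit inequality of Theorem~\ref{quadratic}. Settling for the qualitative $\ll\sqrt p$ would only recover the $O_\varphi$ form of that theorem.
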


Next, we list several known results about Dirichlet characters and $L$-functions. These will be used in Section \ref{original case} to deal with the monomial case $\varphi(x) = x^m$.
\begin{lemma}[\cite{MV}, Theorem 2]
\label{dirichlet character}
    Let $p$ be a prime number. Assuming the Generalized Riemann Hypothesis, for any non-principal character $\chi$ modulo $p$ and any $x$, we have 
    $$\sum_{n\leq x}\chi(n)\ll \sqrt{p}\log\log p.$$
\end{lemma}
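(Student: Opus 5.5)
The natural route is the one of Montgomery and Vaughan: reduce the incomplete character sum to a weighted sum of $\bar\chi(h)e(h\alpha)/h$ via Pólya's Fourier expansion, and then show, using GRH, that only numbers $h$ built from very small primes contribute substantially. Their total weight is of size $\log\log p$.

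First, for non-principal $\chi$ modulo $p$ with Gauss sum $\tau(\chi)$, $|\tau(\chi)|=\sqrt p$, Pólya's expansion gives
\[
\sum_{n\le x}\chi(n)=\frac{\tau(\chi)}{2\pi i}\sum_{1\le |h|\le H}\frac{\bar\chi(h)\bigl(1-e(-hx/p)\bigr)}{h}+O\Bigl(1+\frac{p\log p}{H}\Bigr).
\]
This is the same finite Fourier expansion of an interval indicator as in Lemma \ref{estimate_T}, with $\chi$ replacing the additive twist. Taking $H=p$, it suffices to prove that uniformly in $\alpha\in\mathbb R$
\[
S(\alpha):=\sum_{h\le p}\frac{\bar\chi(h)e(h\alpha)}{h}\ll \log\log p .
\]
Trivially $S(\alpha)\ll\log p$, which only recovers Pólya--Vinogradov, so the saving must come from the arithmetic of $\bar\chi(h)$.

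Second, fix $y=(\log p)^{B}$ with $B$ a large constant, and factor each $h=ab$ uniquely, where $a$ is $y$-smooth and $b$ has all prime factors $>y$. Write
\[
S(\alpha)=\sum_{a\ y\text{-smooth}}\frac{\bar\chi(a)}{a}\sum_{b\le p/a}^{\flat}\frac{\bar\chi(b)e(ab\alpha)}{b},
\]
where $\flat$ denotes the restriction to $b$ free of prime factors $\le y$. The term $b=1$ contributes at most $\sum_{a\ y\text{-smooth}}1/a=\prod_{q\le y}(1-1/q)^{-1}\ll\log y\ll\log\log p$ by Mertens. This is exactly the claimed size, so the task is to show that the terms with $b>1$ contribute $O(\log\log p)$ in total, or better $O(1)$.

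Third, and this is the main obstacle, comes the treatment of the rough part $b>1$ with the twist $e(ab\alpha)$. Under GRH one has $\sum_{n\le z}\bar\chi(n)\Lambda(n)\ll z^{1/2}\log^2(pz)$, and the same holds for twists by $\chi_0$-type characters. Without the phase, partial summation then shows that the sum over rough $b>1$ is $\ll y^{-1/2}(\log p)^{O(1)}$, which is negligible once $B$ is large.

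With the phase, I would split according to a rational approximation $\alpha\approx r/q$ with $q\le Q$, handled by Dirichlet's theorem.
\begin{itemize}
\item If $q$ is small, the phase $e(h r/q)$ decomposes into Dirichlet characters modulo $q$ up to Gauss-sum factors. Then $\bar\chi\psi$ is a character of modulus $pq$, the GRH estimate above applies to it, and the rough part is again negligible.
\item If $q$ is large, I would bound $\sum_{h\le p}e(h\alpha)\bar\chi(h)/h$ directly, by combining large-sieve/Vinogradov-type estimates for $\sum_{h\le N}e(h\alpha)$ over short dyadic ranges with partial summation. Small $h$, where $e(h\alpha)$ is not yet oscillating, contributes only $O(1)$ after removing the smooth part.
\end{itemize}
Balancing $Q$ and $y$ should give $S(\alpha)\ll\log\log p$ uniformly. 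Inserting this into Pólya's expansion yields $\sum_{n\le x}\chi(n)\ll\sqrt p\log\log p$.

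I expect the minor-arc bound, which must be uniform in $\alpha$, to require the most care. If it resists, I would simply follow the Montgomery--Vaughan treatment of exponential sums with multiplicative coefficients.
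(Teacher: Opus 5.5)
The paper gives no proof of this lemma; it quotes Theorem 2 of Montgomery--Vaughan, so your sketch is in effect a reconstruction of their argument. Your outer structure is right: Pólya's expansion, the $\log\log p$ coming from $(\log p)^B$-smooth $h$, and GRH making the rough part negligible near rationals with small denominator. The gap is the minor-arc step, which is precisely the new input of Montgomery--Vaughan. There you need cancellation in $\sum_{M<h\le 2M}\bar\chi(h)e(h\alpha)$ for a multiplicative coefficient. This is their Theorem 1: if $|\alpha-r/q|\le q^{-2}$, $(r,q)=1$ and $2\le R\le q\le N/R$, then $\sum_{h\le N}f(h)e(h\alpha)\ll N/\log N+NR^{-1/2}(\log R)^{3/2}$ for every multiplicative $|f|\le 1$. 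It is proved by a bilinear-form argument starting from $\log h=\sum_{d\mid h}\Lambda(d)$, combined with a large-sieve type inequality. Estimates for the geometric sums $\sum_{h\le N}e(h\alpha)$ do not involve $\bar\chi$ and cannot produce this. Your fallback ``follow the Montgomery--Vaughan treatment'' is circular, since that is exactly the result being cited. Even granting Theorem 1, the dyadic blocks contribute $\sum_M 1/\log M\asymp\log\log p$, not $O(1)$, and $R$ must be at least about $(\log p)^{2+\varepsilon}$ for the second term to sum to $O(1)$.

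Second, a single global Dirichlet approximation with one parameter $Q$ cannot organise the argument, because Theorem 1 only covers blocks with $M\ge qR$. Take $Q=p/(\log p)^D$ and $\alpha=\tfrac12+(\log p)^{D+10}/p$. Every admissible $r/q$ then has $q\gg p/(\log p)^{D+10}$, so $\alpha$ counts as ``minor''. Yet for $h\le p/(\log p)^{D+11}$ one has $e(h\alpha)=(-1)^h(1+O(1/\log p))$. This phase is oscillating but has denominator $2$, so neither Theorem 1 nor your ``not yet oscillating'' remark applies, and the trivial bound on this range costs $\log p$.

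The fix is to approximate scale by scale, applying Dirichlet with parameter $M/R$ to the block $(M,2M]$. The scales admitting some $q<R$ with $|\alpha-r/q|\le R/(qM)$ form an initial segment $h\le N_0$. On that segment one fixed $r/q$ works, with phase variation $N_0|\beta|\le R/q$. Your smooth/rough GRH argument then applies once $y$ is a large enough power of $\log p$ to absorb the loss $\sqrt q\,(1+R/q)$ from the Gauss sums and from $e(h\beta)$. All blocks above $N_0$ have $q_M\ge R$ and are handled by Theorem 1.

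A smaller point: the bound for $y$-rough $b>1$ does not follow from the $\Lambda$-sum by partial summation alone. Use $g(n)\log n=\sum_{d\mid n}g(d)\Lambda(n/d)g(n/d)$ for the completely multiplicative $g=\bar\chi\,\mathbf 1_{P^-(n)>y}$, together with $\sum_{d\le X}|g(d)|/d\ll\log X/\log y$.
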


To explicitly calculate the quadratic case $m=2$, we need the following basic properties of quadratic residues and the class number formula.

\begin{lemma}[\cite{CF}, Page 46, 4.5(c)(d) \& 4.6(a)]
\label{CF_lemma}
Let $p$ be an odd prime. Then 
\begin{enumerate}
    \item[(1)] For $p\equiv1\pmod{4}$, the number of quadratic residues in the interval $(0,\frac{p}{2})$ equals the number of nonresidues;
    \item[(2)] For $p\equiv3\pmod{4}$, there are more quadratic residues than the number of quadratic non-residues in the interval $(0,\frac{p}{2})$;
    \item[(3)] For $p\equiv3\pmod{4}$ and $p>3$, let $h(-p)$ denote the class number of $\mathbb{Q}(\sqrt{-p})$. Then we have 
    $$h(-p)=\frac{1}{2-\legendre{2}{p}}\sum_{n=1}^{\frac{p-1}{2}}\legendre{n}{p}.$$
\end{enumerate}
\end{lemma}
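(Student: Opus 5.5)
The statement collects three classical facts, and I would prove them in the order (1), (3), (2), since (2) follows at once from (3). Throughout, write $\chi(n)=\legendre{n}{p}$ and $S=\sum_{n=1}^{(p-1)/2}\chi(n)$. Note that $S$ is the number of quadratic residues in $(0,\frac p2)$ minus the number of non-residues there.

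For (1), the key input is that $p\equiv1\pmod 4$ gives $\chi(-1)=1$. Hence $\chi(p-n)=\chi(n)$, so
$$0=\sum_{n=1}^{p-1}\chi(n)=2S.$$
Therefore $S=0$, which is exactly statement (1).

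For (3), I would start from Dirichlet's class number formula for $\mathbb{Q}(\sqrt{-p})$ with $p\equiv 3\pmod 4$ and $p>3$. Here the number of roots of unity is $w=2$, and the formula reads
$$h(-p)=-\frac1p\,T,\qquad T:=\sum_{n=1}^{p-1}\chi(n)\,n .$$
Put $U=\sum_{a=1}^{(p-1)/2}\chi(a)\,a$. I will compute $T$ in two ways in terms of $U$ and $S$, then eliminate $U$.

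First, pair $a$ with $p-a$ and use $\chi(-1)=-1$. This gives
$$T=2U-pS.$$

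Second, split $T$ into even and odd $n$. The even $n$ in $(0,p)$ are exactly $2a$ with $1\le a\le\frac{p-1}{2}$. Hence the even part is $2\chi(2)U$, and $\sum_{n \text{ even}}\chi(n)=\chi(2)S$. The odd $n$ are exactly $p-m$ with $m$ even. Using $\chi(p-m)=-\chi(m)$, the odd part equals
$$-pS\chi(2)+2\chi(2)U.$$
Adding the two parts gives
$$T=4\chi(2)U-p\chi(2)S.$$

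Now substitute $U=(T+pS)/2$ from the first identity into the second. This yields
$$T\bigl(1-2\chi(2)\bigr)=p\chi(2)S,$$
and therefore
$$h(-p)=\frac{\chi(2)}{2\chi(2)-1}\,S .$$
Checking the two cases $\chi(2)=\pm1$, the factor $\frac{\chi(2)}{2\chi(2)-1}$ equals $1$ when $\chi(2)=1$ and $\frac13$ when $\chi(2)=-1$. In both cases it equals $\frac{1}{2-\chi(2)}$, which gives the stated formula.

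For (2), when $p>3$ statement (3) gives $S=(2-\chi(2))\,h(-p)>0$, since the class number is positive. For $p=3$ one checks directly that $S=\chi(1)=1>0$. In either case residues outnumber non-residues in $(0,\frac p2)$.

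The only substantive step is the correct normalization of the analytic class number formula, namely the sign and the factor $w=2$. Everything after that is the bookkeeping above.
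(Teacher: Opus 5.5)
Your proof is correct, and it takes a genuinely different route from the paper, which gives no argument here at all and simply cites all three parts from the literature.

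Your argument follows the classical route:
\begin{enumerate}
\item[(1)] follows from the symmetry $\chi(p-n)=\chi(n)$.
\item[(3)] follows from the first-moment form $h(-p)=-\frac1p\sum_{n=1}^{p-1}n\chi(n)$ of Dirichlet's formula, which is valid for $p>3$, where $w=2$. You evaluate $T$ once via $n\leftrightarrow p-n$ and once via the even/odd split, then eliminate $U$.
\item[(2)] is a corollary of (3) together with $h(-p)\ge 1$, with $p=3$ checked by hand.
\end{enumerate}

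The bookkeeping is right. The relation $T(1-2\chi(2))=p\chi(2)S$ gives $h(-p)=\chi(2)S/(2\chi(2)-1)=S/(2-\chi(2))$ for both values of $\chi(2)$. For example, $p=23$ gives $S=3=h(-23)$, and $p=11$ gives $S=3=3h(-11)$.

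Two inputs deserve a sentence each if you write this up.
\begin{itemize}
\item The character in the class number formula is the Kronecker symbol $\left(\frac{-p}{\cdot}\right)$. It equals $\legendre{\cdot}{p}$ because it is a nontrivial character modulo $p$ of order $2$, and there is exactly one such character.
\item Your normalization is equivalent to the form $h(-p)=\frac{\sqrt p}{\pi}L(1,\chi_p)$ used later in the proof of Theorem~\ref{m=2}. The link is $L(1,\chi_p)=-\pi p^{-3/2}\sum_{n=1}^{p-1}n\chi_p(n)$ for $p\equiv 3\pmod 4$.
\end{itemize}

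What your ordering (1), (3), (2) buys is clarity about where the depth lies. Part (1) is pure symmetry. The positivity in (2) enters only through $h(-p)>0$, that is, ultimately through the analytic class number formula, which is the standard way this inequality is obtained.
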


Finally, to prove that our error term is essentially the best possible, we need the following lower bound for the extreme values of Dirichlet $L$-functions.
\begin{lemma}[\cite{BCE}, Theorem 1(C)]
\label{L-function}
    There exists an absolute positive constant $c$ such that the inequality
    $$L(1, \chi_p) > c \log\log p$$
    holds for infinitely many primes $p \equiv 3 \pmod 4$, where $\chi_p(n) = \legendre{n}{p}$ is the Legendre symbol modulo $p$.
\end{lemma}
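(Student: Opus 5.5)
This statement is quoted from \cite{BCE}. If we were to prove it ourselves, the plan is Chowla's classical strategy: force $\chi_p(q)=1$ for every small prime $q$, so that the Euler product of $L(1,\chi_p)$ over small primes is as large as possible.

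\emph{Step 1 (the progression).} Fix a parameter $y$ and set $M=8\prod_{3\le q\le y}q$, the product over primes $q$. By quadratic reciprocity, the condition that $p\equiv 7\pmod 8$ and $\legendre{q}{p}=1$ for all odd primes $q\le y$ is a union of residue classes $a\bmod M$. This union is nonempty: impose $p\equiv 7\pmod 8$, and for each odd $q\le y$ choose the residue class of $p\bmod q$ according to $q\bmod 4$. Then $\chi_p(q)=1$ for all primes $q\le y$, including $q=2$. Since $\log M\sim y$, to have $\log\prod_{q\le y}(1-1/q)^{-1}\asymp\log y\asymp\log\log p$ it suffices to take $y=\delta\log X$ with $\delta>0$ small. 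Then $M\le X^{2\delta}$ is a small power of $X$.

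\emph{Step 2 (averaging).} Write $\mathcal P$ for the primes $p\in(X,2X]$ with $p\equiv a\pmod M$. Rather than evaluate a single $L(1,\chi_p)$, I would show that its average over $\mathcal P$ is $\gg\log\log X$, which forces some $p\in\mathcal P$ to satisfy the lemma. First apply Pólya--Vinogradov with partial summation to truncate: $L(1,\chi_p)=\sum_{n\le Z}\chi_p(n)/n+O(\sqrt p\log p/Z)$ with $Z=X^{1/2+\epsilon}$. Then interchange the sums. The terms with all prime factors of $n$ at most $y$ have $\chi_p(n)=1$ exactly, and they contribute $\sum_{P^+(n)\le y}1/n=\prod_{q\le y}(1-1/q)^{-1}\gg\log y\gg\log\log X$. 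The remaining $n$ have a prime factor $>y$. For these I would use reciprocity to write $\chi_p(n)$ as a character in $p$ of conductor dividing $4n$ (after removing square factors). Their contribution is then $\sum_n\frac1n\sum_{p\in\mathcal P}\psi_n(p)$, where $\psi_n$ is nonprincipal unless $n$ is a square times a $y$-smooth number.

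\emph{Step 3 (the obstacle).} The hard part is bounding these character sums over primes in the progression $a\bmod M$, uniformly in $n\le Z$. The total modulus $Mn$ can be as large as $X^{1/2+3\delta}$, which is beyond Siegel--Walfisz and only borderline for Bombieri--Vinogradov. A possible exceptional (Siegel) zero must also be handled. My first attempt would be to shrink $Z$ to a small power $X^{\eta}$. To pay for this, I would replace Pólya--Vinogradov with the zero-density approximation $L(1,\chi)\approx\prod_{q\le(\log X)^A}(1-\chi(q)/q)^{-1}$, which by the large sieve (or Heath-Brown's quadratic large sieve) holds for all but $O(X^{1-\kappa})$ moduli $p\le 2X$. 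With $Z=X^{\eta}$, the moduli $Mn$ are $\le X^{2\delta+\eta}$, and the mean-value form of Bombieri--Vinogradov controls the off-diagonal terms on average over $n$. At most one exceptional modulus can spoil this, and it can be discarded by adjusting $y$ slightly. Combining Steps 2 and 3 gives a positive proportion of $p\in\mathcal P$ with $L(1,\chi_p)>c\log\log p$. Letting $X\to\infty$ then produces infinitely many such primes $p\equiv3\pmod4$.
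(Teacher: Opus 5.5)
The paper does not prove this lemma. It is quoted from \cite{BCE}, so your sketch has to stand on its own. Steps 1 and 2 are sound, but there is a genuine gap in Step 3: you place the difficulty in the wrong range of $n$, and your proposed remedy does not supply the estimate that is missing.

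Write $n=s\,r^2t$ with $s\mid M^\infty$ and $t$ squarefree and coprime to $M$. The terms with $t=1$ form your main term $\asymp\log y$. A term with $t>1$ (so every prime factor of $t$ exceeds $y$) contributes, up to sign, about $t^{-1}\log y\cdot|\mathcal P|^{-1}\sum_{p\in\mathcal P}\legendre{p}{t}$.

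Large $t$ cause no uniformity problem. The symbols $\legendre{\cdot}{t}$ are distinct primitive characters, so the multiplicative large sieve gives $\sum_{t\le 2N}|\sum_{p\in\mathcal P}\legendre{p}{t}|^2\le(X+4N^2)|\mathcal P|$. This handles $\varphi(M)(\log X)^{4}\le t\le Z=X^{1/2+\epsilon}$. The mean-value Bombieri--Vinogradov theorem bounds the whole family $\psi\legendre{\cdot}{t}$ ($\psi\bmod M$) by $\ll X(\log X)^4$, and that covers $(\log X)^{C}\le t\le X^{1/6}/M$.

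The real obstruction is the polylogarithmic range $y^{1+\epsilon}<t<(\log X)^{C}$. There the weights $\sum 1/t$ total about $C-1$, so bounded trivially these terms can swamp the main term. You therefore need a genuine saving in $\sum_{p\equiv a\ (M)}\legendre{p}{t}$ for individual moduli $Mt\approx X^{\delta}$.
\begin{itemize}
\item Siegel--Walfisz stops at moduli $(\log X)^A$.
\item The Bombieri--Vinogradov bound $X(\log X)^4$ is useless when $t$ is only a power of $\log X$.
\end{itemize}
What this range requires is Linnik--Gallagher machinery (log-free zero density plus Deuring--Heilbronn). It is not just a matter of ``at most one exceptional modulus'', and shrinking $Z$ does not touch this range. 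The lower bound $|\mathcal P|\gg X/(\varphi(M)\log X)$, which you use tacitly, has the same status when $M=X^\delta$.

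Incidentally, your Euler-product device would make the off-diagonal analysis unnecessary. Since $\chi_p(q)=1$ for $q\le y$, bounding the factors with $y<q\le(\log X)^A$ trivially costs only a factor $\asymp\log y/(A\log\log X)$. You would then only need $|\mathcal P|$ to exceed the exceptional set, which is again a statement about primes in one class modulo $M$.

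The simplest repair is to take $y=(\log X)^{c}$ with fixed $0<c<1/2$. You still have $\log y\asymp\log\log X$, but now $M=\exp((\log X)^{c+o(1)})$. For all moduli $Mt\le\exp(c_0\sqrt{\log X})$, the classical zero-free region together with Landau--Page gives $\sum_{X<p\le 2X}\chi(p)\ll Xe^{-c_1\sqrt{\log X}}$ for every nonprincipal, non-exceptional $\chi$. This is $o(|\mathcal P|)$, with $|\mathcal P|\sim X/(\varphi(M)\log X)$, precisely because $c<1/2$.

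The one possible exceptional character is handled as follows. If necessary, delete from $M$ the largest prime factor of its conductor; that prime is $\gg\log\log X$, so this costs only $1+o(1)$ in the Euler product. After this, the exceptional character either has conductor $\le(\log X)^B$, and is negligible by Siegel's theorem, or has the form $\psi\legendre{\cdot}{t_1}$ with $t_1\to\infty$, and carries weight $O(\log y/t_1)=o(\log y)$.

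The PNT then covers $t\le e^{c_0\sqrt{\log X}}/M$ and the large sieve covers the rest; the two ranges overlap because $c<1/2$. With this change, either your averaging argument (truncated at $Z=X^{1/2+\epsilon}$ via P\'olya--Vinogradov) or the Euler-product shortcut goes through. Averaging gives, for each large $X$, some $p\in\mathcal P$ with $L(1,\chi_p)\gg\log\log p$. It does not give a positive proportion, but existence is all the lemma needs.
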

\section{Estimates for General, Quadratic, and Symmetric Polynomials}
\label{proof of main results}
In this section, we apply the analytic tools established in Section \ref{preliminaries} to provide asymptotic estimates for the counting function $f_\varphi(p)$. Our strategy involves translating the counting problem into an exponential sum over specific intervals, which we then handle using the finite Fourier expansion of indicator functions.

Let $I$ and $J$ be the sets of integers defined by
    $$I := \left\{1, 2, \dots, \frac{p-1}{2}\right\}, \quad J := \left\{\frac{p+1}{2}, \dots, p-1\right\}.$$
    Then the counting function can be rewritten
    $$f_\varphi(p) = \#\{x \in I : \varphi(x) \bmod p \in J\}.$$
    Recall that $e_p(t) = e^{2\pi it/p}$. The indicator function of the set $J$ can be expressed as finite Fourier series
    \[
    \mathbf{1}_J(t)  = \sum_{y\in J} \left(\frac{1}{p} \sum_{a \in \mathbb{F}_p} e_p(a(t-y))\right)  = \frac{1}{p} \sum_{a \in \mathbb{F}_p} e_p(at) T_J(a).
\]
    Substituting this into the counting function, we have
    \[
\begin{aligned}
    f_\varphi(p)  = \sum_{x \in I} \mathbf{1}_J(\varphi(x)) & = \sum_{x \in I}\left(\frac{1}{p} \sum_{a \in \mathbb{F}_p} T(a)  e_p(a\varphi(x))\right) \\
         & = \frac{1}{p} \sum_{a \in \mathbb{F}_p} T(a) \sum_{x \in I} e_p(a\varphi(x)),
\end{aligned}
\]
where $T(a):=T_J(a)$ for convenience.
 
This identity transforms our counting problem into the estimation of incomplete exponential sums over the domain $I$. For general polynomials, we proceed by applying a second finite Fourier expansion to the domain $I$, which leads to a double exponential sum.

\begin{proof}[Proof of Theorem \ref{general}]
    We express the indicator function of the set $I$ as
    $$\mathbf{1}_I(x) = \frac{1}{p} \sum_{b \in \mathbb{F}_p} U(b) e_p(bx), \quad \text{where } U(b) = \sum_{z \in I} e_p(-bz).$$
    Then the counting function can be written as 
\[
f_\varphi(p) = \sum_{x=0}^{p-1} \mathbf{1}_I(x) \mathbf{1}_J(\varphi(x)) = \frac{1}{p^2} \sum_{a \in \mathbb{F}_p} \sum_{b \in \mathbb{F}_p} T(a) U(b) \sum_{x=0}^{p-1} e_p(a\varphi(x) + bx),
\]
    Let $W(a,b) := \sum_{x=0}^{p-1} e_p(a\varphi(x) + bx)$. Since $W(0,0)=p$ and $W(0,b)= \sum_{x=0}^{p-1} e_p(bx) =0$ for $b\neq 0$, we have 
    \[
    \begin{aligned}
    f_\varphi(p) &= \frac{1}{p}T(0)U(0)+\frac{1}{p^2} \sum_{a \neq 0} \sum_{b \in \mathbb{F}_p} T(a) U(b) W(a,b)\\
    &=\frac{(p-1)^2}{4p}+\frac{1}{p^2} \sum_{a \neq 0} \sum_{b \in \mathbb{F}_p} T(a) U(b) W(a,b).
    \end{aligned}
    \]
    
    According to Lemma \ref{Weil bound}, since $p$ is an odd prime and $\deg(a\bar{\varphi}+bx)=\deg(\bar{\varphi})\geq 2$ for $a\neq0$, we have $$|W(a,b)|\leq(\deg(\bar{\varphi})-1)\sqrt{p}\leq(m-1)\sqrt{p}.$$
    By Lemma \ref{estimate_T}, we have $\sum_{a \neq 0} |T(a)| \leq \frac{4}{\pi^2}p\log(3p)$. Similarly, for the set $I$ we have 
    $$\sum_{b \in \mathbb{F}_p} |U(b)| = |U(0)| + \sum_{b \neq 0} |U(b)| \leq \frac{p-1}{2} + \frac{4}{\pi^2}p\log(3p)$$ 
    Consequently, 
    \[
    \begin{aligned}
    \left|f_\varphi(p)-\frac{(p-1)^2}{4p}\right|&=\left|\frac{1}{p^2} \sum_{a \neq 0} \sum_{b \in \mathbb{F}_p} T(a) U(b) W(a,b)\right|\\ &\leq \frac{1}{p^2} \sum_{a \neq 0} \sum_{b \in \mathbb{F}_p} |T(a)| |U(b)| |W(a,b)| \\
    &\leq \frac{(m-1)\sqrt{p}}{p^2}\left( \sum_{a \neq 0} |T(a)| \right) \left( \sum_{b \in \mathbb{F}_p} |U(b)| \right)\\
    &\leq \frac{4(m-1)}{\pi^2}\sqrt{p}\log(3p)\left(\frac{1}{2}+\frac{4}{\pi^2}\log(3p)\right).
    \end{aligned}\]
    This proves the explicit estimate in the theorem. Since $\varphi$ is a fixed polynomial with $\deg(\varphi)\geq2$, there are only finitely many primes $p$ for which the reduction of $\varphi$ modulo $p$ has degree less than 2. For each of them, we use the trivial bound as stated in Remark \ref{remark1.2}. 
    Therefore, we have 
$$f_\varphi(p) = \frac{(p-1)^2}{4p} + O_\varphi(\sqrt{p}\log^2 p).$$
Noting that $\frac{(p-1)^2}{4p} = \frac{p}{4} - \frac{1}{2} + \frac{1}{4p}$ and the bounded terms $-\frac{1}{2} + \frac{1}{4p} = O(1)$ can be absorbed into the $O_\varphi(\sqrt{p}\log^2 p)$ error term, we have $f_\varphi(p) = \frac{p}{4} + O_\varphi(\sqrt{p}\log^2 p)$. 
\end{proof}

The $O_\varphi(\sqrt{p}\log^2 p)$ error term in Theorem \ref{general} arises from applying finite Fourier expansions to both the domain $I$ and the range $J$. For quadratic polynomials, we can avoid the Fourier expansion over the domain $I$ by directly estimating the incomplete exponential sum. This allows us to remove one logarithmic factor in the error term.

\begin{proof}[Proof of Theorem \ref{quadratic}]
    For all primes $p \nmid A$ and for every $a \neq 0$, the sum
    \[
        S(a):=\sum_{x=1}^{(p-1)/2}e_p(a\varphi(x))
    \]
    is an incomplete quadratic Gauss sum. According to the classical uniform bounds for incomplete Gauss sums stated in Lemma \ref{quadratic_bound}, we have
    \[
        |S(a)| \le 7.0508\sqrt{p} + 2 
    \]
    uniformly for all $a \neq 0$. 

    Using the one-dimensional Fourier expansion for the indicator function of $J$ as before, we have
    \[
        f_\varphi(p) = \frac{|I||J|}{p} + \frac{1}{p}\sum_{a\neq 0}T_J(a)S(a).
    \]
    Therefore, by Lemma \ref{estimate_T}, we have
    \[
\left|
f_\varphi(p)-\frac{(p-1)^2}{4p}
\right|
\leq
\frac1p
\left(\max_{a\ne0}|S(a)|\right)
\sum_{a\ne0}|T_J(a)|\leq
\frac4{\pi^2}
\left(7.0508\sqrt p+2\right)\log(3p).
\]
    For finitely many primes $p$ with $p\,|\,A$, we use the trivial bound as stated in Remark \ref{remark1.2}, and thus have $f_\varphi(p)=\frac{p}{4}+O_\varphi(\sqrt{p}\log(p))$.
\end{proof}

For quadratic polynomials, we have proved the estimate $f_\varphi(p)=\frac{p}{4}+O_\varphi(\sqrt{p}\log p)$, which removes one logarithmic factor by specific bounds for incomplete Gauss sums. For polynomials of higher degrees, a similar refinement is possible if $\varphi(x)$ possesses reflection symmetry. If the polynomial satisfies the symmetry condition $\varphi(x) \equiv \varphi(c-x) \pmod p$, we can relate the incomplete sum over the half-interval $I$ directly to a complete exponential sum over the whole field $\mathbb{F}_p$. This alternative approach successfully avoids the Fourier expansion over the domain $I$ and yields an improved estimate of $O_\varphi(\sqrt{p} \log p)$.

\begin{proof}[Proof of Theorem \ref{general_symmetry}]
    For $a \in \mathbb{F}_p$, consider the sum 
    $$S(a) = \sum_{x \in I} e_p(a\varphi(x)).$$
    We introduce the involution $\tau(x) = c - x \pmod p$. Since $c$ is a fixed integer, the symmetry $\varphi(\tau(x)) \equiv \varphi(x) \pmod p$ allows us to rewrite $S(a)$ by substituting $y = \tau(x)$ as follows
    $$S(a) = \sum_{x \in I} e_p(a\varphi(c-x)) = \sum_{y \in \tau(I)} e_p(a\varphi(y)).$$
    By the inclusion-exclusion principle, we have the identity
    \begin{equation}
        2S(a) = \sum_{x \in I \cup \tau(I)} e_p(a\varphi(x)) + \sum_{x \in I \cap \tau(I)} e_p(a\varphi(x)). \label{eq:inclusion_exclusion}
    \end{equation}
    
    We first bound the size of the overlap $I \cap \tau(I)$. An element $x \in I \cap \tau(I)$ must satisfy $1 \le x \le (p-1)/2$ and $1 \le c-x \pmod p \le (p-1)/2$. Let $y = c-x \pmod p$, then $x+y \equiv c \pmod p$. Since $x, y \in I$, their sum satisfies $2 \le x+y \le p-1$. For a fixed $c$ and $p\geq2|c|+3$, the congruence $x+y \equiv c \pmod p$ implies the exact equality $x+y = c$ if $c\geq 2$ or $x+y = c+p$ if $c\leq -1$. For $c=0$ or 1, there are no solutions. For $p<2|c|+3$, $|I\cap\tau(I)|\leq|I|=\frac{p-1}{2}\leq|c|+1$. In all cases, the number of such integer pairs $(x, y)$ is at most $|c|+1$, which is a constant independent of $p$. 
    
    Similarly, the union $I \cup \tau(I)$ covers $\mathbb{F}_p$ except for a set $C$ of constant size, where $|C| = p - |I \cup \tau(I)| = 1 + |I \cap \tau(I)| \leq |c|+2$. 
    Therefore, the first sum in \eqref{eq:inclusion_exclusion} can be completed to a sum over $\mathbb{F}_p$ as
$$\sum_{x \in I \cup \tau(I)} e_p(a\varphi(x)) = \sum_{x \in \mathbb{F}_p} e_p(a\varphi(x)) - \sum_{x \in C} e_p(a\varphi(x)).$$
Substituting this back into \eqref{eq:inclusion_exclusion} and taking modulus, we have
\[
\begin{aligned}
2|S(a)| &= \left|\sum_{x \in \mathbb{F}_p} e_p(a\varphi(x))-\sum_{x \in C} e_p(a\varphi(x))+\sum_{x \in I \cap \tau(I)} e_p(a\varphi(x))\right|\\
&\leq\left|\sum_{x \in \mathbb{F}_p} e_p(a\varphi(x))\right|+\left|\sum_{x \in C} e_p(a\varphi(x))\right|+\left|\sum_{x \in I \cap \tau(I)} e_p(a\varphi(x))\right|\\
&\leq\left|\sum_{x \in \mathbb{F}_p}e_p(a\varphi(x))\right|+2|c|+3.
\end{aligned}
\]
According to Lemma \ref{Weil bound}, if $\bar{\varphi}$ is nonconstant, then, for every $a\neq0$, the complete sum is bounded by $(m-1)\sqrt{p}$. Combining these results, we obtain the bound
$$\max_{a \neq 0} |S(a)| \leq \frac{(m-1)\sqrt{p}+2|c|+3}{2}.$$
On the other hand, according to Lemma \ref{estimate_T}, we have 
$$\sum_{a \neq 0} |T(a)|\leq\frac{4}{\pi^2}p\log(3p).$$
Hence, by the triangle inequality,
\[
\begin{aligned}
\left|f_\varphi(p)-\frac{(p-1)^2}{4p}\right|&=\left|\frac{1}{p} \sum_{a \neq 0} S(a) T(a)\right|\\
&\leq \frac{1}{p} \left( \max_{a \neq 0} |S(a)| \right) \sum_{a \neq 0} |T(a)|\\
 &\leq \frac{2}{\pi^2}\left((m-1)\sqrt{p}+2|c|+3\right)\log(3p).
\end{aligned}
\]
Since the reduction $\bar{\varphi}$ is constant for only finitely many primes $p$, we have
$$f_\varphi(p) = \frac{(p-1)^2}{4p} + O_\varphi(\sqrt{p}\log p)= \frac{p}{4} + O_\varphi(\sqrt{p}\log p).$$
This completes the proof.
\end{proof}

\section{Exact Formula and Conditional Bounds  for the Monomial Case}
\label{original case}
In the previous section, we established asymptotic bounds for general, quadratic, and symmetric polynomials. In this section, we specialize our study to the monomial case $\varphi(x) = x^m$ for an even integer $m \ge 2$. We begin by focusing on the quadratic case $m=2$. This specific case provides a bridge to the class numbers of imaginary quadratic fields, allowing us to explicitly calculate the counting function. This exact unconditional evaluation yields a sharp lower bound that disproves Sun's conjecture and sets the stage for our conditional estimates for higher degrees.

\begin{proof}[Proof of Theorem \ref{m=2}]
    As is proved in Section \ref{proof of main results}, the counting function is
    $$f_2(p)=\#\{k\in I: k^2\bmod{p}\in J\},$$
    which counts the quadratic residues of elements in $J$ belonging to $I$. Hence using the indicator function and properties of the Legendre symbol, the counting function can be written as 
    \[
        f_2(p)=\sum_{n\in J}\frac{1+\legendre{n}{p}}{2}
        =\frac{|J|}{2}+\frac{1}{2}\sum_{n=\frac{p+1}{2}}^{p-1}\legendre{n}{p}
        =\frac{p-1}{4}+\frac{1}{2}\sum_{n=1}^{\frac{p-1}{2}}\legendre{-n}{p}
        =\frac{p-1}{4}+\frac{1}{2}\legendre{-1}{p}\sum_{n=1}^{\frac{p-1}{2}}\legendre{n}{p}.
    \] 
    By Lemma \ref{CF_lemma} (1)(2), in this interval, the number of the quadratic residues is equal to that of quadratic non-residues for $p\equiv 1\pmod{4}$. So we have 
    \[
f_2(p)=
\begin{cases}
    \frac{p-1}{4} & \text{if } p\equiv 1\pmod{4}\\
    \frac{p-1}{4}-\frac{1}{2}\sum_{n=1}^{(p-1)/2}\legendre{n}{p} & \text{if } p\equiv 3\pmod{4} 
\end{cases}.
\]
Let $h(-p)$ denote the class number of $K=\mathbb{Q}(\sqrt{-p})$. According to Lemma \ref{CF_lemma} (3), for $p>3$ and $p\equiv 3\pmod{4}$, the sum of Legendre symbols can be written as 
$$\frac{1}{2}\sum_{n=1}^{(p-1)/2}\legendre{n}{p}=\frac{1}{2}h(-p)\cdot \left(2-\legendre{2}{p}\right).$$
Hence we have the explicit formula for $p>3$
    \[
f_2(p)=
\begin{cases}
    \frac{p-1}{4} & \text{if } p\equiv 1\pmod{4}\\
    \frac{p-1}{4}-\frac{3}{2}h(-p) & \text{if } p\equiv 3\pmod{8}\\
    \frac{p-1}{4}-\frac{1}{2}h(-p) & \text{if } p\equiv 7\pmod{8}
\end{cases}.
\]

    From Dirichlet's analytic class number formula for imaginary quadratic fields, we have $h(-p) = \frac{\sqrt{p}}{\pi} L(1, \chi_p)$. According to the unconditional $\Omega$-result in Lemma \ref{L-function}, there exists an absolute constant $c>0$ such that $L(1, \chi_p) > c \log\log p$ for infinitely many primes $p\equiv 3\pmod{4}$. Therefore, for these primes, 
    $$ \left|f_2(p)-\frac{p}{4}\right| \geq \frac{c}{2\pi}\sqrt{p}\log\log p. $$ 
    Taking $C=\frac{c}{2\pi}$ completes the proof.
\end{proof}

For $m=2$, Littlewood \cite[Theorem 1]{Littlewood} proved that $L(1,\chi_p)=O(\log\log p)$ under the Generalized Riemann Hypothesis, which implies the conditional upper bound $f_2(p) = p/4 + O(\sqrt{p}\log\log p)$. While the exact formula is restricted to $m=2$, we can investigate higher even powers $m \ge 2$ by noticing that the monomial $x^m$ has a rich multiplicative structure over finite fields. This observation allows us to translate the counting problem into the estimation of incomplete Dirichlet character sums. Assuming the Generalized Riemann Hypothesis for Dirichlet \(L\)-functions, we can generalize this upper bound to all even integers $m$.

\begin{proof}[Proof of Theorem \ref{estimate_GRH}]
    Since the monomial $x^m$ is an even function for an even positive integer $m$, the sum over the half-interval $I$ can be directly extended to the whole group $\mathbb{F}_p^*$. Thus, the counting function can be written as $f_m(p) = \frac{1}{2}\sum_{k=1}^{p-1} \mathbf{1}_J(k^m)$. Let $N_m(y)$ denote the number of solutions to $x^m \equiv y \pmod p$. Using the multiplicative character expansion, $N_m(y) = \sum_{\chi^m = \chi_0} \chi(y)$. Substituting this into the counting function and exchanging the order of summation, we have 
    $$f_m(p) = \frac{1}{2} \sum_{\chi^m = \chi_0} \sum_{y \in J} \chi(y) = \frac{p-1}{4} + \frac{1}{2} \sum_{\substack{\chi^m = \chi_0 \\ \chi \neq \chi_0}} \sum_{y \in J} \chi(y).$$
    The error term is a finite sum of incomplete Dirichlet character sums over the interval $J$. According to Lemma \ref{dirichlet character}, under the Generalized Riemann Hypothesis, each inner sum can be bounded by $O(\sqrt{p}\log\log p)$. Since the number of non-principal characters in the outer sum is bounded by $m-1$, the total error term is bounded by $O_m(\sqrt{p}\log\log p)$.  
\end{proof}

\begin{remark}
       Theorem \ref{m=2} shows that the factor \(\log\log p\) is unavoidable
at least in the quadratic case.  Thus Sun's conjectural
\(O(\sqrt p)\) error term cannot hold uniformly for all \(m\).
Theorem \ref{estimate_GRH} gives, under GRH, a matching upper-order
estimate for all even degree monomials.
\end{remark}
\section{Generalization of the Counting Function}
\label{section_generalizaion}
In this section, we show that the $O(\sqrt{p}\log p)$ bound holds for polynomials invariant under the $G_d$-action by evaluating the sum over a fundamental domain.
\begin{proof}[Proof of Theorem \ref{fundamental domain}]
Using the same method in the proof of Theorem \ref{general}, we write
$$f_A(p) = \frac{1}{p} \sum_{a \in \mathbb{F}_p} T(a) S_A(a),$$
where $T(a) = \sum_{y \in J} e_p(-ay)$ and $S_A(a) = \sum_{x \in A} e_p(ax^d)$. For $a=0$, the term is $\frac{1}{p} |J| |A| = \frac{1}{p} \left(\frac{p-1}{2}\right) \left(\frac{p-1}{d}\right) = \frac{(p-1)^2}{2dp}$. When $a \neq 0$, we use the invariance under the $G_d$-action. For any integer $0 \leq i \leq d-1$ and any $x \in A$, we have $(\omega^i x)^d = \omega^{id} x^d \equiv x^d \pmod p$. Therefore, the incomplete sum over any orbit remains identical
$$\sum_{x \in \omega^i A} e_p(ax^d) = \sum_{x \in A} e_p(ax^d) = S_A(a).$$
We decompose the complete exponential sum $W(a) := \sum_{x=0}^{p-1} e_p(ax^d)$ over the disjoint orbits and get
$$W(a) = e_p(0) + \sum_{i=0}^{d-1} \sum_{x \in \omega^i A} e_p(ax^d) = 1 + d S_A(a),$$
which means $S_A(a) = \frac{W(a) - 1}{d}$. According to the Weil bound, $|W(a)| \le (d-1)\sqrt{p}$. Thus, we have
$$|S_A(a)| \le \frac{(d-1)\sqrt{p} + 1}{d}.$$
Applying Lemma \ref{estimate_T}, the error term is bounded by

\[
\begin{aligned}
   \left|f_A(p)-\frac{(p-1)^2}{2dp}\right|&=\left|\frac{1}{p}\sum_{a\neq 0}T(a)S_A(a)\right|\\&\leq \frac{1}{p} \left( \max_{a \neq 0} |S_A(a)| \right) \sum_{a \neq 0} |T(a)|\\
 &\leq \frac{4}{d\pi^2}\left((d-1)\sqrt{p}+1\right)\log(3p).
\end{aligned}
\]
Therefore, we have $f_A(p)=\frac{p}{2d}+O_d(\sqrt{p}\log p)$. 
\end{proof}

\section*{Acknowledgments}
The authors are very grateful to the reviewer for their constructive suggestions, which have significantly improved the quality of this paper. The authors are supported by National Natural Science Foundation of China (No. 12231009, No. 12471006).

\end{document}